\newtheorem{thm}{Theorem}
\newtheorem{lem}[thm]{Lemma}
\newtheorem{prop}[thm]{Proposition}
\newtheorem{cor}[thm]{Corollary}
\theoremstyle{remark}
\newtheorem{rmk}[thm]{Remark}
\newtheorem{example}[thm]{Example}
\theoremstyle{definition}
\newtheorem{defi}[thm]{Definition}
\numberwithin{thm}{section} 
\numberwithin{equation}{section}
\newcommand{\Rmnum}[1]{\expandafter\@slowromancap\romannumeral #1@}
\def\R{{\mathbb R}}
\def\N{{\mathbb N}}
\def\H{{\mathbb H}}
\def\S{{\mathbf S}}
\newcommand{\vep}{\varepsilon}
\newcommand{\ol}{\overline}
\newcommand{\St}{\tilde{S}}
\newcommand{\Ht}{\tilde{\mathbb{H}}}
\newcommand{\tr}{\operatorname{tr}}
\newcommand{\spa}{\operatorname{span}}
\newcommand{\env}{\Gamma u}
\newcommand{\bpm}{\begin{pmatrix}}
\newcommand{\epm}{\end{pmatrix}}
\newcommand{\la}{\left\langle}
\newcommand{\ra}{\right\rangle}
\title[Horizontal convex envelope]{\protect{Horizontal convex envelope in the Heisenberg group and applications to sub-elliptic equations}}
\author[Q. Liu]{Qing Liu}
\address{Qing Liu, Department of Applied Mathematics, Fukuoka University, Fukuoka 814-0180, Japan, {\tt qingliu@fukuoka-u.ac.jp}}
\author[X. Zhou]{Xiaodan Zhou}
\address{Xiaodan Zhou, Department of Mathematical Sciences, Worcester Polytechnic Institute, Worcester, MA 01609, USA, {\tt xzhou3@wpi.edu}}
\date{\today}
\begin{document}

\begin{abstract}
This paper introduces in a natural way a notion of horizontal convex envelopes of continuous functions in the Heisenberg group. We provide a convexification process to find the envelope in a constructive manner. We also apply the convexification process to show h-convexity of viscosity solutions to a class of fully nonlinear elliptic equations in the Heisenberg group satisfying a certain symmetry condition. Our examples show that in general one cannot expect h-convexity of solutions without the symmetry condition. 
\end{abstract}

\subjclass[2010]{35R03, 35D40, 26B25, 22E30}
\keywords{Heisenberg group, h-convex, viscosity solutions}

\maketitle

\section{Introduction}

The convex envelope of a given continuous function in $\R^N$ is a powerful tool in analysis and partial differential equations. In this paper, we exploit its sub-Riemannian counterpart, introducing the notion of convex envelope in the first Heisenberg group $\H$ and discussing its applications in the study of fully nonlinear sub-elliptic partial differential equations. 

\subsection{Background and motivation}

In order to explain the motivation of our work, let us first briefly recall the definition, properties and several applications of the convex envelope in $\R^N$.   For any  given  function $u\in C(\R^N)$ that is bounded below. There are at least  two ways to define the Euclidean convex envelope, which we denote by $\Gamma_E u$. The first is to consider the largest convex function majorized by $u$, that is, 
\begin{equation}\label{euclidean envelope1}
(\Gamma_E u)(p):= \sup\left\{v(p): \text{$v$ is convex and $v\leq u$ in $\R^N$}\right\} 
\end{equation}
for all $p\in \R^N$.

An equivalent way of defining the convex envelope is to convexify pointwise the given function $u$; namely, we have
\begin{equation}\label{euclidean envelope2}
\begin{aligned}
(\Gamma_E u)(p)=\inf\bigg\{\sum_{i=1}^{N+1}c_i u(p_i):\  c_i\in [0, 1], \  p_i\in \R^N\ (i=& 1, 2, \ldots, N+1),\\
& \sum_i c_i=1, \ \sum_i c_i p_i=p \bigg\}
\end{aligned}
\end{equation}
for all $p\in \R^N$. Compared to \eqref{euclidean envelope1}, the definition in \eqref{euclidean envelope2} is more constructive and more likely to be used in practical computations. 

Besides the equivalent definitions above, there is a characterization of the convex envelope  in terms of a nonlinear obstacle problem, recently proposed by \cite{Ob, ObS}. More precisely, in view of \cite[Theorem 2]{Ob}, the convex envelope $\Gamma_E$ can be characterized as a maximal viscosity solution of
\begin{equation}\label{euclidean ob}
\max \{-\lambda^\ast_E[v](x), v-u\}=0 \quad \text{ in $\R^N$,}
\end{equation}
that is, 
\[
(\Gamma_E u)(p)=\sup\left\{v(p): \text{$v$ is a subsolution of \eqref{euclidean ob}}\right\}.
\]
Here $\lambda^\ast_E[v]$ denotes the least eigenvalue of $\nabla^2 v$ for any $v\in C^2(\R^N)$. This can be viewed as an alternative expression of \eqref{euclidean envelope1}.

As an important tool, the convex envelope in $\R^N$ is extensively studied and widely applied in different contexts. One of important applications appears in the Alexandrov-Bakelman-Pucci (ABP) estimate for elliptic partial differential equations (see for instance \cite[Definition 3.1, Theorem 3.2]{CCbook}). The convex envelope is used to describe the coincidence set $\left\{p: \Gamma_E u(p)=u(p)\right\}$ contained in the domain, which leads to an accurate form of the estimate. 

Moreover, one can also use the convex envelope to show convexity of solutions to general fully nonlinear elliptic equations in the form
\begin{equation}\label{intro general eqn}
F(p, u(p), \nabla u(p), \nabla^2 u(p))=0\quad \text{in  $\R^N$,}
\end{equation}
where $F: \R^N\times \R\times \R^N\times \S^N\to \R$ is a continuous elliptic operator. Here $\S^N$ denotes the set of all $n\times n$ symmetric matrices. 

Two methods are well known to prove spatial convexity of the unique solution to an elliptic or parabolic equation. One method is based on the so-called convexity (concavity) maximum principle. 
For more details, we refer to \cite{Kor, K4, Ke} on this method for classical solutions and to \cite{GGIS} for a generalized result using viscosity solutions. 

The other method, proposed in \cite{ALL}, is to employ the minimization in \eqref{euclidean envelope2} to find the relation between the first and second derivatives of $\Gamma_E u$ at $p$ and those of $u$ at $p_i$. 
Here we assume that the infimum in \eqref{euclidean envelope2} can be attained at $p_i$ ($i=1, 2, \ldots, N+1$). Roughly speaking, assuming that these functions are smooth, we can easily see that 
\begin{equation}\label{intro all1}
\nabla \Gamma_E u(p)=\nabla u(p_i), \quad \text{for $i=1, 2, \ldots, N+1$,\  and}
\end{equation}
\[
\nabla^2 \Gamma_E u(p)\geq  \sum_i c_i \nabla^2 u(p_i). 
\]
Then under necessary regularity and  assumptions on the operator $F$,  one can use this relation to show that the convex envelope of any viscosity supersolution remains to be a viscosity supersolution \cite[Proposition 3]{ALL}. Such a supersolution preserving property enables us to obtain the convexity of the solution immediately if the comparison principle for the equation is known to hold. We refer the reader also to \cite{Ju} and recent work \cite{IshLS, CrFr3} for more applications of this method in the Euclidean space. 

In the sub-Riemannian setting, an intrinsic notion of convex functions is available. The so-called horizontal convex (h-convex) functions on the Heisenberg group is introduced in \cite{LMS} and on general Carnot groups in \cite{DGN1}. For a smooth function $u$ in $\H$, the h-convexity of $u$ simply requires $u$ to satisfy
\[
(\nabla_H^2 u)^\star\geq 0 \quad \text{in $\H$, }
\]
where $(\nabla_H^2 u)^\star$ stands for the symmetrized horizontal Hessian of $u$.  When $u$ is only a continuous function, then one should interpret the inequality above in the viscosity sense. Regularity properties of h-convex functions can also be found in  \cite{BaRi} for the Heisenberg group and in \cite{Wa, Mag, JLMS, MaSc, BaDr} for more general sub-Riemannian manifolds.

Such a convexity notion enables us to naturally consider the corresponding envelope for a given continuous function $u: \H\to \R$, following the definition \eqref{euclidean envelope1} in the Euclidean case. It is thus expected that the horizontal convex envelope can shed light on the above problems in the Heisenberg group. We remark that there are results on the ABP estimate in the sub-Riemannian circumstances such as \cite{DGN2, GuMo, GaTo, BaCaKr} but mainly for h-convex functions. 

We are thus more interested in the question: under what assumptions on the elliptic or parabolic equations are their solutions h-convex in space? As one of possible important applications, we aim to understand whether the h-convexity preserving property holds for the horizontal mean curvature flow in the Heisenberg group. Well-posedness for the level-set horizontal mean curvature flow equation is addressed in \cite{CC1, FLM1, BCi}. 
Under additional symmetry assumptions on solutions, the h-convexity preserving property for a class of semilinear parabolic equations is shown in \cite{LMZ} by extending a convexity maximum principle to the Heisenberg group. 

In this work, we focus our attention on fully nonlinear elliptic equations in the Heisenberg group in the form
\begin{equation}\label{elliptic eqn}
F(p, u, \nabla_H u, (\nabla_H^2 u)^\star)=0\quad \text{in  $\H$,}
\end{equation}
where $\nabla_H u$ denotes the horizontal gradient of $u$.  It is of our curiosity whether an analogue of the result in \cite{ALL} can bring us better convexity results for such general sub-elliptic equations. Our main purpose is therefore to study fundamental properties of the h-convex envelope and adopt them to understand geometric properties of 
\eqref{elliptic eqn}.  Let us mention that starshapedness of level sets of solutions to general elliptic equations in Carnot groups is recently studied in \cite{DGS}.

It is worth remarking that another interesting question concerns the continuous differentiability of the h-convex envelope. The regularity issue in the Euclidean space is addressed in \cite{GR, KiKr}. We discuss the same question in the Heisenberg group in our forthcoming work \cite{LZ3}.



\subsection{Main results}
As mentioned above, the most reasonable way to define the horizontal convex envelope (h-convex envelope), denoted by $\env$,  is clearly to take the greatest $h$-convex function majorized by $u$, i.e., 
\begin{equation}\label{envelope1}
(\env)(p):= \sup\left\{v(p): \text{$v$ is h-convex and $v\leq u$ in $\H$}\right\}
\end{equation}
for all $p\in \H$; see also Definition \ref{defi envelope1}.
The corresponding construction of $\env$ as in \eqref{euclidean envelope2} is not as straightforward as its definition. One may still attempt to convexify $u$ at each $p\in \H$ by setting 
\begin{equation}\label{envelope op}
S[u](p):=\inf\left\{\sum_{i}c_i u(p_i):\  c_i\in [0, 1], \ p_i\in \H_p\ (i=1, 2, 3),\ \sum_i c_i=1, \ \sum_i c_i p_i=p \right\},
\end{equation}
where $\H_p$ denotes the horizontal plane passing through $p$.
However, $S[u]$ is in fact not necessarily h-convex in $\H$ in general, as shown in Examples \ref{convexify ex2}, \ref{ex:two step} and \ref{ex:failure with coercivity}. Moreover, our examples also show that, without coercivity condition on $u$,  $S[u]$ may not be continuous in spite of the continuity of $u$. The operator $S$ only partially convexifies the function $u$ and the situation is thus totally different from the Euclidean case. 

It turns out that, in order to construct $\env$, one needs to iterate the operator $S$; namely, we show that 
\begin{equation}\label{iteration limit}
S^n[u]\to \env  \quad \text{ pointwise  in $\H$ as $n\to \infty$}
\end{equation}
provided that $u$ is continuous and bounded below in $\H$. The convergence is locally uniformly if $u$ is further assumed to be coercive in $\H$, i.e., 
\begin{equation}\label{coercive}
\inf_{|p|\geq R} {u(p)\over |p|}\to \infty\quad \text{ as $R\to \infty$.}
\end{equation} 
See Theorem \ref{thm equivalence} and Remark \ref{rmk uniforml convergence2} for precise statements. 

It causes much difficulty that the operator $S$ needs to be implemented multiple times to get the envelope $\env$. 
We do not know whether the iteration process can be completed in finite times. Example \ref{ex:one step} indicates that $\env$ can be obtained in one step for certain functions $u$ while Example \ref{ex:two step} shows that two steps are needed sometimes. It is not clear to us how the total number of the necessary iteration is related to the structure of the function $u$. 

We next discuss the application of the h-convex envelope in relation to the h-convexity of solutions to elliptic equations in the Heisenberg group. It was pointed out in our earlier work \cite{LMZ} that the horizontal convexity preserving property fails to hold even for linear transport equations. We now can give an example, showing that the h-convexity also fails for solutions of linear sub-elliptic equations in the form of 
\begin{equation}\label{linear sub-elliptic1}
u-\Delta_H u+\la\zeta, \nabla_H u \ra=f(p) \quad \text{in $\H$,}
\end{equation}
where $\zeta\in \R^2$ and $f\in C(\H)$ is given. Here $\Delta_H u$ denotes the horizontal Laplacian of $u$. Note that for the same type of equations in $\R^N$, i.e, 
\[
u-\Delta u+\la\zeta, \nabla u \ra=f(p) \quad \text{ in $\R^N$,}
\]
where in this case $\zeta\in \R^N$ is given, any smooth solution is convex provided that $f$ is convex in $\R^N$; the proof is merely an application of the maximum principle to the function $\la \nabla^2 u\ \eta, \eta\ra$ for any fixed $\eta\in \R^N$. 

However, similar convexity results cannot be expected for \eqref{linear sub-elliptic1}, since horizontal differentiation is not commutative in general. 
\begin{example}[Failure of h-convexity]\label{example no symmetry}
Consider the linear equation \eqref{linear sub-elliptic1} with $\zeta=(0, 2)$ and
\[
f(x, y, z)=2xz+x^2y+{1\over 4}x^4+{3\over 2}y^2+6y-1
\]
for $(x, y, z)\in \H$. By direct calculations, one can verify that 
\begin{equation}\label{example no symmetry sol}
u(x, y, z)=2xz+x^2y +{1\over 4}x^4-x^2+{3\over 2}y^2
\end{equation}
is the unique solution. (The uniqueness of solutions to this equation with polynomial growth at infinity is due to \cite[Theorem 7.4]{I5}.) Note that 
\[
(\nabla_H^2 f)^\star(x, y, z)=\begin{pmatrix}3x^2 & 3x \\ 3x & 3\end{pmatrix}
\]
but 
\[
(\nabla_H^2 u)^\star(x, y, z)=\begin{pmatrix}3x^2-2 & 3x \\ 3x & 3\end{pmatrix},
\]
which reveals that $u$ is not h-convex at the origin although $f$ is h-convex in $\H$. 
\end{example}
 This example suggests that  more restrictive assumptions on $F$ are needed if one wants to prove h-convexity of the solutions of \eqref{elliptic eqn}. A typical result we can show is as follows. 
 
 \begin{thm}[H-convexity for semilinear ellitpic equations]\label{intro thm}
Let $\alpha, \beta\geq 0$. Assume that $f\in C(\H)$ is h-convex in $\H$ and symmetric with respect to $z$-axis, i.e., 
\begin{equation}\label{zsym}		
u(x, y, z)=u(-x, -y, z), \quad \text{for all $(x, y, z)\in \H$.}		
\end{equation}
Let $\mathcal{A}\subset \R^2$ be a compact set symmetric with respect to the origin, that is, $\zeta\in \mathcal{A}$ implies $-\zeta\in \mathcal{A}$. If $u$ is a coercive solution of the semilinear equation
 \begin{equation}\label{special eqn}
u=\alpha\Delta_H u+\beta\sup_{\zeta\in \mathcal{A}}\la \zeta, \nabla_H u\ra+f(p) \quad \text{in $\H$,}
\end{equation}
then $\env$ is a supersolution. In particular, the unique solution of \eqref{special eqn} with polynomial growth near space infinity is h-convex in $\H$. 
 \end{thm}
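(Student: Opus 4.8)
The plan is to prove that $\env$ is a viscosity supersolution of \eqref{special eqn} and then close the argument by comparison. Indeed, $\env\le u$ by \eqref{envelope1} while $u$ is a subsolution; since \eqref{special eqn} is proper and admits comparison among solutions of polynomial growth (this is the uniqueness statement of \cite[Theorem 7.4]{I5} already used in Example \ref{example no symmetry}), once $\env$ is known to be a supersolution we get $u\le\env$, hence $u=\env$. As $\env$ is h-convex (by \eqref{envelope1} and Theorem \ref{thm equivalence}), so is $u$, and by uniqueness $u$ is the solution of polynomial growth. Everything therefore reduces to the supersolution property of $\env$.

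To obtain it I would exploit the iteration \eqref{iteration limit}. The reflection $R(x,y,z)=(-x,-y,z)$ is a group automorphism of $\H$ carrying each horizontal plane $\H_p$ onto $\H_{R(p)}$, so the operator $S$ of \eqref{envelope op} satisfies $S[v\circ R]=S[v]\circ R$; hence every iterate $S^n[u]$ again satisfies the symmetry \eqref{zsym}, is coercive in the sense of \eqref{coercive}, and is continuous (Theorem \ref{thm equivalence}). It thus suffices to prove the one-step statement: \emph{if $v\in C(\H)$ is coercive, satisfies \eqref{zsym}, and is a supersolution of \eqref{special eqn}, then so is $S[v]$}. Granting this, $S^n[u]$ is a supersolution for all $n$, and since $S^n[u]\to\env$ locally uniformly the standard stability of viscosity supersolutions under locally uniform limits gives the conclusion.

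The one-step statement is where I would run an Alvarez--Lasry--Lions type argument \cite{ALL} along the horizontal plane. Let $\phi\in C^2$ touch $S[v]$ from below at $\bar p$. Using coercivity the infimum defining $S[v](\bar p)$ is attained: $\bar p=\sum_{j=1}^3 c_j p_j$ with $c_j\ge 0$, $\sum_j c_j=1$, $p_j\in\H_{\bar p}$, and $S[v](\bar p)=\sum_j c_j v(p_j)$. Discarding vanishing weights, and noting that if all $p_j=\bar p$ then $\phi$ touches $v$ itself from below at $\bar p$ so the inequality is immediate, we write $p_j=\bar p\cdot h_j$ with $h_j$ in the horizontal layer $V_0=\{(s,t,0):s,t\in\R\}$; the constraint $\sum_j c_j p_j=\bar p$ is equivalent to $\sum_j c_j h_j=0$ in $V_0$, and this is exactly what makes the maps $q\mapsto q\cdot h_j$ satisfy $q\cdot h_j\in\H_q$ and $\sum_j c_j(q\cdot h_j)=q$, so that $\phi(q)\le S[v](q)\le\sum_j c_j v(q\cdot h_j)$ near $\bar p$ with equality at $\bar p$. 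Applying the theorem on sums at the common touching point $\bar p$ — which simultaneously copes with the fact that $v$ is merely continuous — produces second-order subjets of $v$ at the $p_j$ whose gradient and Hessian parts reassemble into those of $\phi$ at $\bar p$ in the $(s,t)$-coordinates on $\H_{\bar p}$ in which the Euclidean Hessian coincides with $(\nabla_H^2\,\cdot\,)^\star$ at the base point. Plugging these into the supersolution inequality for $v$, multiplying by $c_j$, summing, and using (using $\alpha,\beta\ge0$) (i) $\sum_j c_j f(p_j)\ge f(\bar p)$ — valid because $p_j\in\H_{\bar p}$ and a Euclidean segment contained in a single horizontal plane is a horizontal line, so the h-convexity of $f$ applies — (ii) the sublinearity of $\zeta\mapsto\sup_{\zeta\in\mathcal A}\la\zeta,\cdot\ra$, and (iii) the linearity of $\Delta_H$, one arrives at $\phi(\bar p)\ge\alpha\Delta_H\phi(\bar p)+\beta\sup_{\zeta\in\mathcal A}\la\zeta,\nabla_H\phi(\bar p)\ra+f(\bar p)$.

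The hard part, and the place where the symmetry hypotheses are indispensable, is that $q\mapsto q\cdot h_j$ are \emph{right} translations, which do not preserve the left-invariant horizontal frame; transporting jets from $p_j$ back to $\bar p$ therefore introduces correction terms carrying the vertical derivative of $v$ — at first order one computes, for $h_j=(a_j,b_j,0)$, that the horizontal gradient of $q\mapsto v(q\cdot h_j)$ at $\bar p$ equals $\nabla_H v(p_j)+(b_j,-a_j)\,\partial_z v(p_j)$, with analogous second-order corrections in $(\nabla_H^2\,\cdot\,)^\star$. Because $\sum_j c_j h_j=0$, these corrections would telescope away in the weighted sum if $\partial_z v$ were common to the points $p_j$, but in general they do not; the symmetry \eqref{zsym} of $v$, together with the origin-symmetry of $\mathcal A$ (which makes $\sup_{\zeta\in\mathcal A}\la\zeta,\cdot\ra$ even and hence compatible with the reflection), is precisely what allows these terms to be neutralized — for instance by symmetrizing the optimal decomposition about the $z$-axis so that the vertical-derivative contributions occur in cancelling pairs. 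That this symmetry cannot be dispensed with is the content of Example \ref{example no symmetry} and of the transport example in \cite{LMZ}.
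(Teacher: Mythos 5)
Your global skeleton (reduce to showing $\env$ is a supersolution, iterate $S$, pass to the limit by stability, close with the comparison principle) is the same as the paper's, and the bookkeeping facts you invoke — $S$ commutes with the reflection $R(x,y,z)=(-x,-y,z)$, preserves coercivity and continuity, and $S^n[u]\to\env$ locally uniformly — are all correct and are Lemmas \ref{lem axisym1}, \ref{lem coercive} and Remark \ref{rmk uniform convergence}. The gap is in the one step you yourself identify as "the hard part". Your plan is to run the Alvarez--Lasry--Lions computation directly on the left-invariant operator $S$ and to cancel the vertical-derivative corrections "by symmetrizing the optimal decomposition about the $z$-axis so that the contributions occur in cancelling pairs". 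This is not an argument, and it is precisely the route the paper shows does not work: in the Remark following Theorem \ref{thm r-preserve} it is computed that for the left-invariant penalty $W$ one has $\sum_i\nabla_{H,i}W=4\left(-\sum_i c_ig_iy_i,\ \sum_i c_ig_ix_i\right)\neq 0$, so the key identities \eqref{jet first} and \eqref{jet second} fail. Your proposed symmetrization does not rescue this: reflecting the minimizing triple $(p_j,c_j)$ about the $z$-axis produces points $R(p_j)$ that lie in $\tilde\H_{\bar p}$ (equivalently, a decomposition of $R(\bar p)$ in $\H_{R(\bar p)}$), not a second admissible decomposition of $\bar p$ in $\H_{\bar p}$; and the correction terms at $p_j$ and at $R(p_j)$ involve subjets of $v$ at different points, so they cannot be paired off inside a single viscosity inequality. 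The paper's actual mechanism is structurally different: it first proves supersolution preservation for the \emph{right-invariant} convexification $\tilde S$ (Theorem \ref{thm r-preserve}), where the penalization by $\tilde W$ does satisfy $\nabla^2\tilde W\,\ell[a,b]=0$ and $\sum_ic_i\xi_i=\nabla_H\varphi(p^\vep)$, and only then uses the symmetry — globally, via Theorem \ref{thm axisym2} — to conclude $S[u]=\tilde S[u]$ as functions. The symmetry swaps the two operators; it is not used to cancel terms inside one jet computation. Your proposal is missing this pivot to the right-invariant problem, and without it the one-step statement is unproved.

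Two further points. First, your justification of $\sum_jc_jf(p_j)\ge f(\bar p)$ — "a Euclidean segment contained in a single horizontal plane is a horizontal line" — is false: for $q\in\H_{\bar p}$ one has $\H_q\cap\H_{\bar p}$ equal to the line through $q$ and $\bar p$, so the only horizontal segments contained in $\H_{\bar p}$ are those through $\bar p$ itself; a chord $[q_1,q_2]$ of $\H_{\bar p}$ missing $\bar p$ is not horizontal for its own midpoint. The inequality you want is exactly $S[f](\bar p)=f(\bar p)$, which the paper obtains from the equivalence of h-convexity with $S[f]=f$, and in the general scheme it is subsumed by the concavity hypothesis (A3) together with Proposition \ref{prop axisym3} (symmetry makes h-convexity and right-invariant h-convexity of $f$ coincide). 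Second, your iteration presupposes that $u$ itself satisfies \eqref{zsym}; this is not a hypothesis of the theorem but follows from the symmetry of $f$ and $\mathcal A$ via Proposition \ref{prop axisym} and uniqueness, a step you should make explicit.
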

The polynomial growth condition here is again used to guarantee the uniqueness of solutions due to the comparison result in \cite[Theorem 7.4]{I5}; see Section \ref{sec:viscosity} for clarification. 
 
 Note that Theorem \ref{intro thm} is only a special case of our main result, where we prove the h-convexity of the solution to \eqref{elliptic eqn} with a general nonlinear concave symmetric operator $F$, assuming that the comparison principle holds; see Theorem \ref{cor h-convex}.  
We emphasize that  $F$ here is assumed to be concave with respect to all arguments, which is stronger than the condition in the Euclidean case. The reasons why the symmetry and the strong concavity assumptions are needed will be clarified in a moment.

Compared to the left invariant h-convexity, it is in fact easier to obtain the desired results by considering the right invariant case instead. To see this, we introduce the right invariant counterpart $\tilde{S}$ of the partial convexification operator $S$, given by
\begin{equation}\label{envelope op right}
\St[u](p):=\inf\left\{\sum_{i}c_i u(p_i):\  c_i\in [0, 1], \ p_i\in \Ht_p\ (i=1, 2, 3),\ \sum_i c_i=1, \ \sum_i c_i p_i=p \right\},
\end{equation}
where $\Ht_p$ stands for the right invariant horizontal plane passing though $p\in \H$.  We then show, in Theorem \ref{thm r-preserve},  that $\tilde{S}$ has the supersolution preserving property for a class of concave fully nonlinear elliptic operators, i.e., $\tilde{S}[u]$ is a supersolution if $u$ is a supersolution. 

Our proof of Theorem \ref{thm r-preserve} is essentially an adaptation of the argument \cite{ALL} to the sub-Riemannian setting. However, in contrast to the situation in \cite{ALL}, we here have an additional constraint condition due to the extra requirement $p_i\in \tilde{\H}_p$ in \eqref{envelope op right}. Roughly speaking, for any fixed $p\in \H$ and minimizers $p_i\in \H$ in \eqref{envelope op right}, in order to find the relation between $\left((\nabla_H \Gamma u)(p), (\nabla^2_H \Gamma u)^\star(p)\right)$ and $\left(\nabla_H u(p_i), (\nabla^2_H u)^\star(p_i)\right)$, we append to the standard minimization an extra term penalizing the distance between $p_i$ and the horizontal plane through $\sum_i p_i$. We refer the reader to Section \ref{sec:right convex} for technical details.  In summary, our idea more closely resembles the method of Lagrange multipliers rather than direct unconstrained minimization. 

It is the extra penalty term that requires us to impose the concavity of $\xi \mapsto F(p, r, \xi, A)$, which is not needed in the Euclidean case. Roughly speaking, we are not able to obtain the equality as in \eqref{intro all1} for the horizontal gradients but instead we get 
\[
\nabla_H \Gamma u(p)=\sum_i c_i \nabla_H u(p_i),
\]
which demands the concavity of $F$ in the horizontal gradient to conclude. In addition, we have an example, Example \ref{ex strong concavity}, showing that such a strong concavity condition is necessary. This condition unfortunately excludes possible applications of our approach to the mean curvature operator and p-Laplacians in the Heisenberg group. 

Let us briefly discuss the symmetry assumption on $F$. Notice that due to the presence of Example \ref{example no symmetry}, even in the simpler case \eqref{special eqn}, it seems necessary to assume the function $f$ and the term involving $\nabla_H u$ to be symmetric. 
The additional symmetry on the elliptic operator implies that the unique solution $u$ is symmetric about the $z$-axis. This further yields $S[u]=\tilde{S}[u]$ in $\H$; see Theorem \ref{thm axisym2} for details. We thus have $S[u]=u$, which concludes the proof of Theorem \ref{intro thm}. By iterating this argument for $S^n[u]$ and passing to the limit as $n\to \infty$, we can also prove a symmetric supersolution preserving property, namely, if $u$ is a symmetric supersolution, then so is  $\env$. This property is addressed for the general equation \eqref{elliptic eqn} in Theorem \ref{thm preserve rot}. 


We finally mention that one can also obtain the Euclidean convexity of the unique solution to \eqref{elliptic eqn} under weaker structure assumptions on $F$. In particular, the symmetry condition is no longer needed in this case. Our result is consistent with that in \cite{ALL} and our proof is based on slight modification of the arguments used for Theorem \ref{thm r-preserve} but applied to the Euclidean convex envelope. See Section \ref{sec:euclidean} for more explanations. 

The rest of the paper is organized in the following way. In Section \ref{sec:prep}, we give a brief review of the Heisenberg group and the theory of viscosity solutions.  We also recall basic notions and regularity results related to the horizontal convex functions in Section \ref{sec:convex}. The definition of h-convex envelop is given in Section \ref{sec:defi}. We introduce the iterated convexification process and give several concrete examples of the envelope in Section \ref{sec:convexify}. Section \ref{sec:elliptic} is devoted to our main results with detailed discussion on the application of the h-convex envelope to the study on sub-elliptic PDEs.

\subsection*{Acknowledgments}

The work of the first author was partially supported by Grant-in-Aid for Scientific Research (C) (No. 19K03574) from Japan Society for the Promotion of Science and by the Grant from Central Research Institute of Fukuoka University (No. 177102). The work of the second author was partially supported by an AMS Simons Travel Grant.

\section{Preparations}\label{sec:prep}

\subsection{Preliminaries on the Heisenberg group}\label{sec:Heisenberg}

Recall that the Heisenberg group $\mathbb{H}$ is $\mathbb{R}^{3}$ endowed with the non-commutative group multiplication 
\[
(x_p, y_p, z_p)\cdot (x_q, y_q, z_q)=\left(x_p+x_q, y_p+y_q, z_p+z_q+\frac{1}{2}(x_py_q-x_qy_p)\right),
\]
for all $p=(x_p, y_p, z_p)$ and $q=(x_q, y_q, z_q)$ in $\H$. Note that the group inverse
of $p=(x_q, y_q, z_q)$ is  $p^{-1}=(-x_q, -y_q, -z_q)$. The Kor\'anyi gauge is given by
\[
|p|_G=((p_1^2+p_2^2)^2+16 p_3^2)^{1/4},
\]
and the left-invariant Kor\'anyi or gauge metric is 
\[
d_L(p, q)=|p^{-1}\cdot q|_G.
\]
We denote by $B_R(p)$ the gauge ball centered at $p$ with radius $R>0$. We denote the gauge ball centered at the origin simply by $B_R$. 

The Lie algebra of $\mathbb{H}$ is generated by the left-invariant vector fields
\[
X={\partial\over \partial x}-\frac{y}{2}{\partial\over \partial z}, \quad  Y={\partial\over \partial y}+\frac{x}{2}{\partial\over \partial z}, \quad Z={\partial \over \partial z}.
\]
  One may easily verify the commuting relation $Z=[X, Y]=
  XY- YX$.
  
The horizontal gradient  of $u$ is given by 
\[
\nabla_H u=(X u, Y u)
\]
and the symmetrized second horizontal Hessian  $(\nabla_H^2 u)^\ast\in \S^{2}$ is given by  
\[
(\nabla_H^2 u)^\star:=\left(\begin{array}{cc} X^2 u & (XY u+YX u)/2\\
(XY u+YX u)/2 & Y^2 u\end{array}\right).
\]
Here $\S^{n}$ denotes the set of all $n\times n$ symmetric matrices.

Define
\[
\mathbb{H}_0=\{h\in \mathbb{H}: h=(x, y, 0) \ \text{ for $x, y\in \mathbb{R}$}\}.
\]
For any $p\in \H$, we call
\[
\H_p=\{p\cdot h: \ h\in \H_0\}
\]
the horizontal plane through $p$. 
The horizontal plane through $p=(x_p, y_p, z_p)$ can be expressed by the following equation:
\begin{equation}\label{eq plane}
y_p x-x_py+2z-2z_p=0.
\end{equation}

\begin{rmk}\label{rmk right vector}
Later we will also use the right invariant vector fields in $\H$ given by 
\[
\tilde{X}={\partial \over \partial x}+\frac{y}{2}{\partial \over \partial z}, \quad  \tilde{Y}={\partial\over \partial y}-\frac{x}{2}{\partial\over \partial z}, \quad \tilde{Z}={\partial \over \partial z}.
\]
Accordingly,  the right invariant horizontal gradient 
\[
\tilde{\nabla}_H u=(\tilde{X} u, \tilde{Y} u)
\]
and symmetrized Hessian
\[
(\tilde{\nabla}_H^2 u)^\star:=\left(\begin{array}{cc} \tilde{X}^2 u & (\tilde{X}\tilde{Y} u+\tilde{Y}\tilde{X} u)/2\\
(\tilde{X}\tilde{Y} u+\tilde{Y}\tilde{X} u)/2 & \tilde{Y}^2 u\end{array}\right).
\]
We also write $\tilde{\H}_p$ to denote the right invariant horizontal plane, that is, 
\begin{equation}\label{right plane}
\tilde{\H}_p=\{h\cdot p: h\in \H_0\}. 
\end{equation}
We can write an analogue of the plane equation \eqref{eq plane} for $\tilde{\H}_p$ as below:
\begin{equation}\label{eq plane right}
y_p x-x_py-2z+2z_p=0.
\end{equation}
\end{rmk}

\subsection{Viscosity solutions}\label{sec:viscosity}
Viscosity solutions are known to have many applications in fully nonlinear equations in the Euclidean space; see \cite{CIL} for an introduction.  We refer to \cite{Bi1, Mnotes} and many others for generalization on the sub-Riemannian manifolds. 

Let us consider \eqref{elliptic eqn},
where $F: \H\times \R \times \R^2\times \S^2\to \R$ is a continuous operator satisfying the following assumptions. 
\begin{enumerate}
\item[(A1)] $F$ is  (degenerate) elliptic; namely, 
\[
F(p, r, \xi, A_1)\leq F(p, r, \xi, A_2)
\]
for all $p\in \H$, $r\in \R$, $\xi\in \R^2$ and $A_1, A_2\in \S^2$ with $A_1\geq A_2$. 

\item[(A2)] $F$ is proper; namely, 
\[
F(p, r_1, \xi, A)\geq F(p, r_2, \xi, A)
\]
for all $p\in \H, \xi\in \R^2, A\in \S^2$ and $r_1, r_2\in \R$ with $r_1\geq r_2$.

\end{enumerate}

We begin with a definition for viscosity solutions of \eqref{elliptic eqn} below. Denote by $USC(\H)$ (resp., $LSC(\H)$) the class of upper semicontinuous  (resp., lower semicontinuous) functions in $\H$.

\begin{defi}[Definition of viscosity solutions]
Let $\Omega$ be a domain in $\H$. A locally bounded function $u\in USC(\Omega)$ (resp., $u\in LSC(\Omega)$) is said to be a viscosity subsolution (resp., supersolution) of \eqref{elliptic eqn} in $\Omega$ if whenever there exist $\varphi\in C^2(\Omega)$ and $p_0\in \Omega$ such that $u-\varphi$ attains a (strict) maximum (resp., minimum) in $\Omega$ at $p_0$, we have 
\[
\begin{aligned}
&F\left(p_0, u(p_0), \nabla_H \varphi(p_0), (\nabla_H^2 \varphi)^\star(p_0)\right)\leq 0\\
&\quad \left(\text{resp., } F\left(p_0, u(p_0), \nabla_H \varphi(x_0), (\nabla_H^2 \varphi)^\star(p_0)\right)\geq 0 \right).
\end{aligned}
\]
A bounded continuous function $u$ is called a viscosity solution of \eqref{elliptic eqn} if it is both a subsolution and  a supersolution.
\end{defi}

As a standard remark, one can use the so-called subelliptic semijets to give an alternative definition of sub- and supersolutions; we refer the reader to \cite{Bi1, Mnotes} for details. Hereafter let us denote by $J^{2, \pm}_H u(p)$ the semijets of $u$ at a given point $p\in \H$. Moreover, we use $\overline{J}^{2, \pm}_H u(p)$ to denote the ``closure'' of $J^{2, \pm}_H u(p)$. We can equivalently define a supersolution by requiring that, for any $p_0\in \H$,
\[
F(p_0, u(p_0), \xi, A)\geq 0
\]
holds provided that $(\xi, A)\in \overline{J}^{2, -}_H u(p_0)$; see \cite[Proposition 3.1]{Bi1}. One can give an equivalent definition for subsolutions analogously. 

\begin{rmk}\label{rmk comparison}
Throughout this work, we always assume that a comparison principle holds for \eqref{elliptic eqn}, since it is not our main concern here. Let us recall the standard comparison principle states that any subsolution $u$ and any supersolution $v$ of \eqref{elliptic eqn} satisfies $u\leq v$ in $\H$. However, it is worth remarking that establishing a comparison principle for fully nonlinear elliptic equations in the whole space is completely nontrivial even in the Euclidean space. One usually needs to impose additional assumptions on the growth rate of sub- and supersolutions near space infinity. 

However, on the other hand, a comparison principle is available in \cite{I5} for a special class of operators 
\begin{equation}\label{special form}
F(p, r, \xi, A)=r-\alpha\tr A-\beta\sup_{\zeta\in \mathcal{A}}\la \zeta, \xi\ra-f(p)
\end{equation}
for $(p, r, \xi, A)\in \H\times \R\times \R^2\times \S^2$, where $\alpha, \beta\geq 0$, $\mathcal{A}$ is a compact subset of $\R^2$ and $f\in C(\H)$  is given. 
Indeed, in this case one can write \eqref{elliptic eqn} in the Euclidean coordinates and apply \cite[Theorem 7.4]{I5} to get $u\leq v$ in $\H$ if the subsolution $u$ and the supersolution $v$ have polynomial growth at space infinity, namely, there exists $k>0$ such that 
\[
\sup_{p\in \H}{u(p)\over |p|^k+1}<\infty \quad \text{and}\quad \inf_{p\in \H}{v(p)\over |p|^k+1}>-\infty. 
\]
\end{rmk}

Viscosity solutions to the parabolic equation,
\begin{equation}\label{parabolic eqn}
u_t+F(p, u, \nabla_H u, (\nabla_H^2 u)^\star)=0 \quad \text{ in $\H\times (0, \infty)$,}
\end{equation}
can be similarly defined. 

\subsection{Horizontal convexity}\label{sec:convex}
In what follows, we review basic results on the notion of convex functions in the Heisenberg group; more details can be found in \cite{LMS, DGN1}. 

\begin{defi}[{\cite[Definition 4.1]{LMS}}]\label{defi h-convex}
Let $\Omega$ be an open set in $\mathbb{H}$ and $u: \Omega\to \mathbb{R}$ be an upper semicontinuous function. The function $u$ is said to be horizontally convex or h-convex in $\Omega$, if for every $p\in \H$ and $h\in \H_0$ such that $[p\cdot h^{-1}, p\cdot h]\subset \Omega$, we have 
\begin{equation}\label{h-convex eqn}
u(p\cdot h^{-1})+u(p\cdot h)\geq 2u(p).
\end{equation}
\end{defi}

One may also define convexity of a function in the following way. 
\begin{defi}[{\cite[Definition 3.3]{LMS}}]\label{defi v-convex}
Let $\Omega$ be an open set in $\mathbb{H}$ and $u:\Omega\to \mathbb{R}$ be an upper semicontinuous function. The function $u$ is said to be v-convex in $\Omega$ if
\begin{equation}\label{v-convex eqn}
(\nabla^2_H u)^\star(p)\geq 0  \quad \text{for all $p\in \Omega$}
\end{equation}
in the viscosity sense. 
\end{defi}
It is easily seen that $u\in C^2(\Omega)$ is v-convex if it satisfies \eqref{v-convex eqn} everywhere in $\Omega$. It is known that the h-convexity and v-convexity are equivalent \cite{LMS}; see also the related results in Carnot groups \cite{Wa, JLMS}. Below we also review a well-known result concerning the Lipschitz regularity of h-convex functions. 
\begin{thm}[Local Lipschitz regularity of h-convex functions {\cite[Theorem 3.1]{LMS}}]\label{thm lip}
Suppose that $u: \H\to \R$ is an h-convex (v-convex) function. Then $u$ is locally bounded and Lipschitz. Moreover, the following estimate holds:
\[
\|\nabla_H u\|_{L^\infty(B_R)}\leq {C\over R}\|u\|_{L^{\infty}(B_{2R})}.
\]
Here $C>0$ is independent of $u$ and $R$. 
\end{thm}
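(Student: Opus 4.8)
The plan is to exploit the defining inequality \eqref{h-convex eqn} along horizontal segments to bound the oscillation of $u$, and then to promote this to a Lipschitz bound in the Carnot--Carathéodory (or gauge) metric, which is where the horizontal gradient lives. The key point is that h-convexity is precisely convexity along every horizontal line, so on each such line $u$ is a genuinely convex function of one real variable, and the classical one-dimensional theory (convex functions are locally Lipschitz, with slope controlled by oscillation over a slightly larger interval) applies verbatim. The only work is to patch these one-dimensional estimates together uniformly and to convert Euclidean-looking segment lengths into the correct homogeneous scaling in $R$.

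First I would fix $R>0$ and a point $p\in B_R$, and show $u$ is bounded on $B_{2R}$: since $u$ is h-convex it is in particular v-convex, i.e.\ $(\nabla_H^2 u)^\star\ge 0$ in the viscosity sense, and a standard maximum-principle / barrier argument (or the sub-mean-value property along horizontal segments through any point, iterated to fill a neighborhood) gives local boundedness; alternatively one invokes that an h-convex function is locally bounded above by \eqref{h-convex eqn} and locally bounded below because it cannot dip below the average of two symmetric horizontal translates. Next, for $p\in B_R$ and a unit horizontal direction, parametrize the horizontal segment $t\mapsto p\cdot(t h)$ with $h\in\H_0$; by \eqref{h-convex eqn} the function $g(t)=u(p\cdot(th))$ is convex in $t$ on the interval for which the segment stays in $B_{2R}$. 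For a convex function of one variable, the difference quotient is monotone, so for $t$ in the middle third of that interval the one-sided derivatives are bounded by $\osc_{B_{2R}}u$ divided by (a fixed fraction of) the length of the interval. Because the horizontal segment from a point of $B_R$ can be extended a definite amount before leaving $B_{2R}$ — here one uses the homogeneity of the gauge under the dilations $\delta_\lambda(x,y,z)=(\lambda x,\lambda y,\lambda^2 z)$, so that the admissible length scales like $R$ — this yields $|Xu|,|Yu|\le (C/R)\|u\|_{L^\infty(B_{2R})}$ at points of differentiability, hence a.e., and by convexity everywhere in the appropriate (viscosity / a.e.) sense.

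Finally, to get genuine local Lipschitz continuity of $u$ itself (not merely boundedness of $\nabla_H u$), I would connect any two points $p,q\in B_R$ by a horizontal path of length comparable to $d_L(p,q)$ — this is the Chow--Rashevskii connectivity of $\H$, and one can take an explicit path consisting of a bounded number of horizontal segments staying inside $B_{2R}$ — and integrate the horizontal gradient bound along it, obtaining $|u(p)-u(q)|\le (C/R)\|u\|_{L^\infty(B_{2R})}\,d_L(p,q)$. Since the gauge metric and the Carnot--Carathéodory metric are bi-Lipschitz equivalent, this is the claimed estimate. The main obstacle is the bookkeeping in this last step: unlike $\R^N$, two nearby points need not lie on a common horizontal line, so one must control both the number of segments in the connecting path and the fact that the intermediate points stay within $B_{2R}$ while keeping the total length $\lesssim d_L(p,q)$; this is exactly where one pays the price of the sub-Riemannian geometry, and it is the only genuinely non-Euclidean ingredient in the argument.
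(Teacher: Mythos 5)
This theorem is quoted in the paper from \cite[Theorem 3.1]{LMS} and is not proved there; the authors explicitly defer to \cite{LMS, DGN1, BaRi}. Your sketch reproduces the standard argument from that literature (one-dimensional convexity along horizontal lines, plus horizontal connectivity), and it is essentially sound. Two points deserve sharpening. First, your account of local boundedness has the inequality \eqref{h-convex eqn} pointing the wrong way in places: the local upper bound is actually free, since $u$ is assumed upper semicontinuous and hence bounded above on compact sets; \eqref{h-convex eqn} is then used for the \emph{lower} bound via the reflection $u(p\cdot h)\geq 2u(p)-u(p\cdot h^{-1})$. Second, and more substantively, this reflection only reaches points of the horizontal plane $\H_p$, which is a $2$-dimensional set, so a single application cannot bound $u$ below on a neighborhood; one must make (at least) two horizontal moves, using the bracket-generating structure of $\H$, to reach points with a vertical displacement, keeping all intermediate points inside $B_{2R}$. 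This is exactly the phenomenon the paper exploits in Examples \ref{convexify ex2} and \ref{ex:two step} (one horizontal convexification step does not suffice), so it is the genuinely non-Euclidean crux of the boundedness step and should not be dismissed as standard. Once two-sided local bounds are in hand, your derivative estimate along horizontal segments (admissible length $\geq R$ by the triangle inequality for $d_L$, so the monotone difference quotients give $|\la\nabla_H u,h\ra|\leq C\|u\|_{L^\infty(B_{2R})}/R$) and the final integration along a concatenation of boundedly many horizontal segments of total length comparable to $d_L(p,q)$ are correct and complete the proof.
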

We remark that the original result in \cite[Theorem 3.1]{LMS} is stated for a general domain $\Omega\subset \H$. Here we only consider the special case $\Omega=\H$ for our own purpose. We refer to \cite{LMS, DGN1, BaRi} for details on this result and to \cite{Wa, Mag, MaSc, BaDr} for further discussion on general sub-Riemannian manifolds.

\begin{rmk}\label{rmk right convex}
We can also consider a right invariant version of h-convex functions in $\H$ by using the vector field in Remark \ref{rmk right vector}. More precisely, we say a function is right invariant h-convex in an open set  $\Omega\subset \H$ if for every $p\in \H$ and $h\in \H_0$ such that $[h^{-1}\cdot p, h\cdot p]\subset\Omega$, we get
\[
u(h^{-1}\cdot p)+u(h\cdot p)\geq 2u(p).
\]
Equivalently, we may also use the viscosity inequality 
\[
(\tilde{\nabla}^2_H u)^\ast(p)\geq 0  \quad \text{for all $p\in \Omega$.}
\]
Applying an argument symmetric to the proof of Theorem \ref{thm lip}, we can show that the right invariant h-convex function is also locally bounded and Lipschitz (with respect to the right invariant metric). 
\end{rmk}

In general, h-convex functions or right invariant h-convex functions are not necessarily convex in the Euclidean sense, as shown in the following example. 
\begin{example}
Let $u(p)=x^2 y^2+2z^2$. It is easily verified that $u$ is an h-convex and right invariant h-convex function that is not Euclidean convex. 
\end{example}
The above example also clearly indicates that a function that is both left invariant h-convex and right invariant h-convex may not be Euclidean convex.

\section{Definition of h-convex envelope}\label{sec:defi}

In this section we aim to extend the definitions of Euclidean convex envelopes to the Heisenberg group.

To define a horizontal convex envelope of $u\in C(\H)$ bounded below, we may follow Perron's method and consider a sub-Riemannian analogue of \eqref{euclidean envelope1} as follows. 

\begin{defi}\label{defi envelope1}
Suppose that $u\in C(\H)$ is bounded below. A function $\env: \H\to \R$ is said to be the h-convex envelope of $u$ if $\env$ is the greatest $h$-convex function majorized by $u$, i.e., $\env$ is given by \eqref{envelope1}.
\end{defi}

The function $\env$ is well-defined, since $u$ is bounded below and any constant is h-convex.  It is clear that 
\[
\inf_{\H} u\leq \env\leq u \quad \text{in $\H$.}
\]

It is not difficult to show that $\env$ is locally Lipschitz in $\H$ due to the Lipschitz estimate of h-convex functions in Theorem \ref{thm lip}. One may also obtain the local Lipchitz continuity of $\env$ by first showing its h-convexity, as below, and then applying Theorem \ref{thm lip}.

\begin{lem}[H-convexity of the envelope]
Suppose that $u\in C(\H)$ is bounded from below. Let $\env$ be given as in \eqref{envelope1}. Then $\env$ is h-convex in $\H$.  
\end{lem}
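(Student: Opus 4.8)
The plan is to verify the defining inequality \eqref{h-convex eqn} directly for $\env$, exploiting the fact that a pointwise supremum of h-convex functions is again h-convex, provided the supremum is finite. First I would observe that $\env$ is well-defined and finite: $u$ is bounded below, any constant function is h-convex and majorized by $\inf_\H u \le u$, so the supremum in \eqref{envelope1} is taken over a nonempty family, and it is bounded above by $u$, hence finite at every point of $\H$. Denote by $\mathcal{F}$ the family of all h-convex functions $v$ on $\H$ with $v \le u$; then $\env = \sup_{v\in\mathcal{F}} v$ pointwise.

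Next I would fix $p\in\H$ and $h\in\H_0$ (note $[p\cdot h^{-1},p\cdot h]\subset\H$ automatically since the domain is all of $\H$) and show
\[
(\env)(p\cdot h^{-1}) + (\env)(p\cdot h) \ge 2(\env)(p).
\]
For any $v\in\mathcal{F}$, h-convexity of $v$ gives $v(p\cdot h^{-1}) + v(p\cdot h) \ge 2v(p)$, and since $v\le\env$ on the left and we are free to choose $v$, taking the supremum over $v\in\mathcal{F}$ on the left-hand side yields
\[
(\env)(p\cdot h^{-1}) + (\env)(p\cdot h) \ge \sup_{v\in\mathcal{F}}\bigl(v(p\cdot h^{-1}) + v(p\cdot h)\bigr) \ge \sup_{v\in\mathcal{F}} 2v(p) = 2(\env)(p).
\]
This establishes the inequality \eqref{h-convex eqn} for $\env$ at every $p$ and $h$. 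The only remaining point is the upper semicontinuity required by Definition \ref{defi h-convex}: since $\env \le u$ and $u\in C(\H)$, while $\env$ is a supremum of (continuous, indeed locally Lipschitz) h-convex functions, $\env$ is automatically lower semicontinuous as a sup of continuous functions; to get upper semicontinuity one can either invoke that a finite pointwise sup of h-convex functions satisfying the midpoint-type inequality \eqref{h-convex eqn} together with local boundedness is automatically continuous via the Lipschitz estimate of Theorem \ref{thm lip} applied to $\env$ itself (once \eqref{h-convex eqn} is known and $\env$ is locally bounded, the proof of Theorem \ref{thm lip} applies verbatim and gives local Lipschitz continuity, in particular upper semicontinuity), or argue directly.

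The main obstacle, such as it is, is the regularity bookkeeping: Definition \ref{defi h-convex} presupposes upper semicontinuity, so one cannot literally conclude ``$\env$ is h-convex'' from the inequality alone without knowing $\env\in USC(\H)$. I would handle this by noting that $\env$ is locally bounded (trapped between $\inf_\H u$ locally — more precisely between a local lower bound and $u$), then showing that the proof of the Lipschitz estimate in \cite[Theorem 3.1]{LMS} only uses the inequality \eqref{h-convex eqn} plus local boundedness, not a priori continuity; hence $\env$ is locally Lipschitz, in particular continuous, and therefore genuinely h-convex in the sense of Definition \ref{defi h-convex}. Everything else is a one-line supremum argument.
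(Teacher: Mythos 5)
Your argument is correct, but it takes a genuinely different route from the paper. You verify the midpoint inequality \eqref{h-convex eqn} of Definition \ref{defi h-convex} directly, via the one-line observation that a pointwise supremum of functions satisfying \eqref{h-convex eqn} again satisfies it; the paper instead works with the equivalent v-convexity characterization (Definition \ref{defi v-convex}) and runs a Perron-type argument: a test function $\varphi$ touching $\env$ from above at a strict maximum is shown to touch members $v_j$ of the family at nearby points $p_j\to p_0$, whence $(\nabla^2_H\varphi)^\star(p_j)\geq 0$ passes to the limit. Your approach is more elementary (no test functions, no stability of viscosity inequalities, no appeal to the h-/v-convexity equivalence of \cite{LMS}), and you correctly identify the one real issue it creates: Definition \ref{defi h-convex} presupposes upper semicontinuity, and a supremum of continuous functions is only lower semicontinuous a priori. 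Your proposed fix — applying the Lipschitz estimate of Theorem \ref{thm lip} to $\env$ itself on the strength of the claim that the proof in \cite[Theorem 3.1]{LMS} uses only \eqref{h-convex eqn} plus local boundedness — is plausible but unverified as stated; a cleaner and self-contained route is to normalize the family by replacing each competitor $v$ with $\max\{v,\inf_\H u\}$ (still h-convex and still $\leq u$), so that all members are uniformly trapped between $\inf_\H u$ and $u$ on each ball, and then Theorem \ref{thm lip} gives a uniform local Lipschitz constant for the family, which passes to the supremum. This is essentially what the paper implicitly invokes when it asserts, just before the lemma, that $\env$ is locally Lipschitz; note that the paper's Perron argument equally relies on that preliminary regularity, so neither proof escapes this bookkeeping.
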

\begin{proof}
As mentioned above, $\env$ is locally Lipschitz in $\H$. The proof of the h-convexity of $\env$ streamlines the argument of Perron's method. More precisely, if there exists $\varphi\in C^2(\H)$ and $p_0$ such that $\env-\varphi$ attains a strict maximum at $p_0$, then we may find $v_j\in C(\H)$ h-convex and $p_j\in \H$ with $p_j\to p_0$ as $j\to \infty$ such that $v_j-\varphi$ attains a local maximum at $p_j$. By the h-convexity of $v_j$, we obtain that 
\[
(\nabla^2_H \varphi)^\star(p_j) \geq 0,
\]
from which we deduce that 
\[
(\nabla^2_H \varphi)^\star(p_0) \geq 0
\]
by passing to the limit as $j\to \infty$.
\end{proof}

Motivated by \cite{Ob}, we may consider the following obstacle problem
\begin{equation}\label{obs prob}
\max\{-\lambda^\ast[v],\ v-u\}=0 \quad \text{ in $\H$}
\end{equation}
in the viscosity sense, where $\lambda^\ast[v]$ denotes the least eigenvalue of $(\nabla^2_H v)^\star$. 
\begin{thm}[Characterization by an obstacle problem]
Assume that $u\in C(\H)$ is bounded from below. Let $\env$ be the h-convex envelope defined in \eqref{envelope1}. Then 
\[
(\env)(p)=\sup\{v(p): \text{$v$ is a subsolution of \eqref{obs prob}}\}.
\]
\end{thm}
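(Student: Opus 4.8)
Write $w(p):=\sup\{v(p): v \text{ is a subsolution of \eqref{obs prob}}\}$; the goal is to show $w=\env$. The plan is to prove the two inequalities separately by identifying, on each side, the relevant class of competitors. The key observation is that a function $v$ is a subsolution of \eqref{obs prob} precisely when it satisfies, in the viscosity sense, both $v\le u$ in $\H$ and $\lambda^\ast[v]\ge 0$ in $\H$ --- that is, $v$ is v-convex (equivalently h-convex by \cite{LMS}) \emph{and} majorized by $u$. Once this equivalence of the two competitor classes is established, the two suprema in \eqref{envelope1} and in the statement are literally the same supremum, and the theorem follows.

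\textbf{Key steps.} First I would make precise the reduction of the obstacle equation to a pair of one-sided conditions. If $v\in USC(\H)$ is a subsolution of $\max\{-\lambda^\ast[v], v-u\}=0$, then at any $p_0$ where $v-\varphi$ has a (strict) local maximum for $\varphi\in C^2$, we have $\max\{-\lambda^\ast[(\nabla_H^2\varphi)^\star(p_0)], v(p_0)-u(p_0)\}\le 0$; since the maximum of two quantities is $\le 0$ iff each is $\le 0$, this gives both $v(p_0)\le u(p_0)$ and $(\nabla_H^2\varphi)^\star(p_0)\ge 0$. The first, holding at every point (take $\varphi$ to touch $v$ from above, which is possible on a dense set, then use upper semicontinuity), yields $v\le u$ pointwise in $\H$; the second says exactly that $v$ is v-convex in the sense of Definition \ref{defi v-convex}. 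Conversely, if $v$ is h-convex (hence v-convex, hence locally Lipschitz by Theorem \ref{thm lip}) and $v\le u$, then at any test point $v-\varphi$ has a local max we get $(\nabla_H^2\varphi)^\star(p_0)\ge 0$, i.e. $-\lambda^\ast[(\nabla_H^2\varphi)^\star(p_0)]\le 0$, and $v(p_0)-u(p_0)\le 0$, so $v$ is a subsolution of \eqref{obs prob}. Thus the competitor classes in \eqref{envelope1} and in the statement coincide, and therefore so do their upper envelopes: $w=\env$.

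\textbf{Where the care is needed.} The main point requiring attention is the handling of the obstacle term $v-u$ in the viscosity sense for merely upper semicontinuous $v$: one must argue that the pointwise inequality $v\le u$ in $\H$ really does follow from the viscosity subsolution property. The clean way is to observe that for a subsolution one may always perturb a test function to make the maximum strict, and to note that at a point where $v$ is touched from above by a smooth $\varphi$ one reads off $v(p_0)\le u(p_0)$ directly; at points where no such touching occurs one uses that $v$ is automatically USC and obtains the inequality by approximation together with the continuity of $u$. A secondary subtlety is symmetric: in going from ``h-convex and $v\le u$'' back to ``subsolution of \eqref{obs prob}'', one should record that h-convex functions are continuous (indeed locally Lipschitz, Theorem \ref{thm lip}), so that they are legitimate USC test-competitors and the inequality $v\le u$ is stable. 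Beyond this, the argument is purely formal --- it is really just the statement that Perron's supremum over h-convex minorants equals the maximal subsolution of the associated obstacle problem, exactly as in the Euclidean characterization of \cite{Ob}. I would also remark, as the authors likely do, that $w=\env$ combined with the previous lemma shows the supremum is attained and is itself h-convex.
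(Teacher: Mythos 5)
Your argument is correct and is precisely the ``direct adaptation of \cite[Theorem 2]{Ob}'' that the paper invokes without writing out: the paper omits this proof entirely, so there is nothing to diverge from. The core identification --- $v\in USC(\H)$ is a subsolution of \eqref{obs prob} if and only if $v$ is v-convex (hence h-convex by \cite{LMS}) and $v\le u$ pointwise --- makes the two suprema range over the same class, which is exactly the intended argument. One small caution on your ``dense set of touching points plus upper semicontinuity'' step: USC gives $\limsup_{q\to p_0}v(q)\le v(p_0)$, which is the wrong direction for propagating $v\le u$ from a dense set to all of $\H$; the clean route is to maximize $q\mapsto v(q)-k|q-p_0|^2$ near $p_0$, obtaining touching points $q_k\to p_0$ with $v(q_k)\ge v(p_0)$, so that $v(p_0)\le v(q_k)\le u(q_k)\to u(p_0)$ by continuity of $u$. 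With that standard fix your proof is complete.
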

We omit the proof, since it is merely a direct adaptation of \cite[Theorem 2]{Ob} to the sub-Riemannian circumstances based on \eqref{envelope1}.

We shall give several concrete examples in Section \ref{sec:example} and discuss an application to convexity of solutions to nonlinear PDEs in Section \ref{sec:elliptic}. 

\section{Pointwise Convexification}\label{sec:convexify} 

\subsection{A partially convexifying operator}\label{sec:convexify1}

We next use the convexification similar to \eqref{euclidean envelope2} to find the horizontal convex envelope. In $\H$, it is natural to consider an operator $S$ as given by \eqref{envelope op} for any $u\in USC(\H)$ bounded from below.
In contrast to the Euclidean case \eqref{euclidean envelope2}, the main difference here is that $p_i$ are restricted on the horizontal plane $\H_p$ rather than the whole space. 
It is obvious that
\[
\inf_{\H} u\leq S[u]\leq u \quad \text{in $\H$.}
\]
It is also clear that $S[u]=u$ in $\H$ if and only if $u$ is h-convex. As is explained later, $S[u]$ is not necessarily h-convex for an arbitrary $u\in C(\H)$; see Example \ref{convexify ex2}.

Let us now verify that $S$ maps $u\in USC(\H)$ to $S[u]\in USC(\H)$.
\begin{lem}[Upper semicontinuity preserving]\label{lem usc}
Suppose that $u\in USC(\H)$ is bounded from below. Let the operator $S$ be defined as in \eqref{envelope op}. Then $S[u]\in USC(\H)$.
\end{lem}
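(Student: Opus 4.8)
The statement to prove is that $S[u] \in USC(\H)$ whenever $u \in USC(\H)$ is bounded from below. The natural approach is to show that $S[u]$ is the pointwise infimum of a family of functions, each of which is upper semicontinuous, but since the infimum of upper semicontinuous functions need not be upper semicontinuous, this must be handled more carefully via a direct $\limsup$ argument together with a compactness reduction on the admissible configurations in the definition \eqref{envelope op}.

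First I would fix $p_0 \in \H$ and a sequence $p_n \to p_0$, and aim to show $\limsup_{n\to\infty} S[u](p_n) \leq S[u](p_0)$. Given $\vep > 0$, choose weights $c_i^0 \in [0,1]$ and points $p_i^0 \in \H_{p_0}$ ($i=1,2,3$) with $\sum_i c_i^0 = 1$, $\sum_i c_i^0 p_i^0 = p_0$, and $\sum_i c_i^0 u(p_i^0) \leq S[u](p_0) + \vep$. The key step is to produce, for each large $n$, an admissible configuration for $p_n$ — i.e.\ points $p_i^n \in \H_{p_n}$ and weights $c_i^n$ with $\sum_i c_i^n p_i^n = p_n$ — that converges to the configuration at $p_0$ as $n \to \infty$. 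Since $\H_{p_n}$ is an affine plane depending continuously on $p_n$ (via the plane equation \eqref{eq plane}), one can project the points $p_i^0$ onto $\H_{p_n}$, or more conveniently translate them along the group so that $p_i^0 \mapsto p_i^0 \cdot (p_0^{-1} \cdot p_n)$, which lands them on $\H_{p_n}$ and converges to $p_i^0$; then correct the weights or the points slightly so that the barycentric constraint $\sum_i c_i^n p_i^n = p_n$ holds exactly. I would keep $c_i^n = c_i^0$ and adjust the points, or conversely, whichever makes the algebra cleanest — a small perturbation argument suffices since the map from configurations to barycenters is a submersion away from degenerate (collinear) configurations, and the degenerate case can be treated separately or by a limiting argument.

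Having produced such $p_i^n \to p_i^0$, the definition of $S$ gives $S[u](p_n) \leq \sum_i c_i^n u(p_i^n)$. Then upper semicontinuity of $u$ yields $\limsup_{n} u(p_i^n) \leq u(p_i^0)$ for each $i$, and since the weights are bounded and (after passing to a subsequence) convergent, $\limsup_n \sum_i c_i^n u(p_i^n) \leq \sum_i c_i^0 u(p_i^0) \leq S[u](p_0) + \vep$. Here the lower bound on $u$ is used to rule out any $-\infty$ pathology in the $\limsup$ of sums. Letting $\vep \to 0$ gives $\limsup_n S[u](p_n) \leq S[u](p_0)$, which is the desired semicontinuity. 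One should also note at the outset that $S[u] \geq \inf_\H u > -\infty$ so $S[u]$ is real-valued, and $S[u] \leq u$, consistent with the displayed bounds.

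The main obstacle I anticipate is the perturbation step producing admissible near-barycentric configurations on the varying plane $\H_{p_n}$: one must ensure the barycenter constraint can be met exactly after a small correction while keeping the weights in $[0,1]$, and handle the possibility that the chosen near-optimal configuration at $p_0$ is degenerate (the three points collinear, or coincident, so that not every nearby point is a barycenter of a nearby triple). This is resolvable — for instance by first perturbing $p_i^0$ generically within $\H_{p_0}$ at the cost of an extra $\vep$ using upper semicontinuity only in a one-sided way via the boundedness below, or by exploiting that $\H_{p_n} \to \H_{p_0}$ as affine planes and transporting the whole configuration rigidly by the group translation $q \mapsto q \cdot (p_0^{-1}\cdot p_n)$, which preserves barycenter-type relations up to controlled error — but it is the one genuinely technical point in an otherwise routine argument.
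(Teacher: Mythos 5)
Your overall strategy is the same as the paper's: fix a near\hyphenation{near-optimal}-optimal admissible triple $(c_i^0,p_i^0)$ for $S[u](p_0)$, transport it to an admissible triple on $\H_{p_n}$ for $p_n$ close to $p_0$, and invoke upper semicontinuity of $u$ at the $p_i^0$. The paper's proof does exactly this (phrased with ``$p$ sufficiently close to $\ol p$'' rather than sequences), so the skeleton is fine. However, the one step you yourself flag as the genuinely technical point is handled incorrectly. You claim that the right translation $p_i^0\mapsto p_i^0\cdot(p_0^{-1}\cdot p_n)$ ``lands them on $\H_{p_n}$.'' It does not: writing $p_i^0=p_0\cdot h_i$ with $h_i\in\H_0$ and $g=p_0^{-1}\cdot p_n$, one has $p_i^0\cdot g=p_n\cdot(g^{-1}h_ig)$, and conjugation does not preserve $\H_0$ — for $g=(a,b,c)$ and $h=(x,y,0)$ a direct computation with the group law gives $g^{-1}hg=(x,y,bx-ay)$, whose third coordinate is nonzero in general. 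So the right-translated points miss $\H_{p_n}$ (the error does not even tend to zero uniformly over unbounded $h_i$, though for the fixed triple it does). What right translation \emph{does} preserve exactly is the barycenter, since $R_g$ is an affine map of $\R^3$; you have the two properties exactly reversed. Consequently the subsequent ``correct the weights or points slightly'' step, the submersion argument, and the discussion of degenerate (collinear) configurations are all attempts to patch a defect introduced by the wrong choice of translation.

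The fix is simple and makes all of that machinery unnecessary: use the \emph{left} translation $q\mapsto(p_n\cdot p_0^{-1})\cdot q$. This maps $\H_{p_0}=p_0\cdot\H_0$ bijectively onto $\H_{p_n}=p_n\cdot\H_0$, and since left translation by a fixed element is an affine map of $\R^3$, it commutes with Euclidean convex combinations: $\sum_i c_i^0\,\bigl((p_n\cdot p_0^{-1})\cdot p_i^0\bigr)=(p_n\cdot p_0^{-1})\cdot p_0=p_n$. So the same weights $c_i^0$ work, the barycentric constraint holds exactly with no correction, and the new points converge to the $p_i^0$ as $p_n\to p_0$, after which your use of $u\in USC(\H)$ and the boundedness below closes the argument as in the paper. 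With this replacement your proof is correct and coincides with the paper's.
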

\begin{proof}
Fix $\ol{p}\in \H$ arbitrarily. In view of \eqref{envelope op}, for any $\vep>0$, there exist $\ol{c}_i\in [0, 1]$ and $\ol{p}_i\in \H_{\ol{p}} (i=1, 2, 3)$ such that 
\[
\sum_{i=1, 2, 3} \ol{c}_i=1, \quad \sum_{i=1, 2, 3} \ol{c}_i \ol{p}_i=\ol{p}
\]
and
\begin{equation}\label{usc eq1}
S[u](\ol{p})\geq \sum_{i=1, 2, 3}\ol{c}_i u(\ol{p}_i)-\vep.
\end{equation}
Moreover, by the continuity of $p\mapsto \H_p$ and the upper semicontinuity of $u$, for any $p\in \H$ sufficiently close to $\ol{p}$, we can find $p_i\in \H_p$ near $\ol{p}_i$ such that 
\[
\sum_{i=1, 2, 3}\ol{c}_i p_i=p
\]
and for all $i=1, 2, 3$
\[
u(p_i)\leq u(\ol{p}_i)+\vep.
\]
It follows that 
\[
\sum_i \ol{c}_i u(p_i)\leq \sum_i \ol{c}_i u(\ol{p}_i)+3\vep.
\]
By \eqref{usc eq1} and \eqref{envelope op}, we obtain that 
\[
S[u](p)\leq S[u](\ol{p})+4\vep,
\]
which implies the upper semicontinuity of $S[u]$. 
\end{proof}

However, $S$ does not preserve lower semicontinuity in general, which is very different from the Euclidean case \cite[Lemma 4]{ALL}. An example is as follows. 

\begin{example}[Loss of lower semicontinuity preserving]\label{convexify ex1}
Let us construct $u\in C(\H)$ bounded below satisfying 
\begin{equation}\label{lsc eq1}
u(p)=1 \quad \text{ for all $p\in \H_0$,  and}
\end{equation}
\begin{equation}\label{lsc eq2}
\lim_{x\to \infty} u(q_x)=\lim_{x\to \infty} u\left((q_x)^{-1}\right)=0, \quad \text{where $q_x=(x, 0, -1/2)$}.
\end{equation}
Take $\vep>0$ arbitrarily small and consider the point $p_\vep=(\vep, \vep, 0)\in \H$. It is not difficult to verify that
\[
p_+, p_-\in \spa\{X(p_\vep), Y(p_\vep)\}= \H_{p_\vep},
\] 
where
\[
p_+=\left({1\over\vep}, 0, -{1\over 2}\right), \quad p_-=\left(-{1\over \vep}, 0, {1\over 2}\right).
\] 
By the definition of $S$ in \eqref{envelope op}, we easily see that 
\[
S[u](p_\vep)\leq {1\over 2}u(p_+)+{1\over 2}u(p_-)
\]
and therefore by \eqref{lsc eq2} 
\[
\liminf_{\vep\to 0}S[u](p_\vep)\leq 0.
\]
On the other hand, by \eqref{lsc eq1} we deduce that $S[u](0)= 1$. Hence, we conclude that the lower semicontinuity of $S[u]$ fails to hold at the origin. 
\end{example}

The counterexample above would not exist if we could exclude the situation like  \eqref{lsc eq2}.  The lower semicontinuity for $S[u]$ can be obtained by assuming that $u$ is coercive, as indicated in the following result.

\begin{lem}[Lower semicontinuity preserving under coercivity]\label{lem lsc}
Suppose that $u\in LSC(\H)$ satisfies the coercivity condition \eqref{coercive}. 
Let the operator $S$ be defined as in \eqref{envelope op}. Then $S[u]\in LSC(\H)$.
\end{lem}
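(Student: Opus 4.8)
The plan is to show that along any sequence $p_n \to \bar p$ we have $\liminf_{n\to\infty} S[u](p_n) \ge S[u](\bar p)$. First I would observe that the coercivity condition \eqref{coercive} together with $u \in LSC(\H)$ forces the infimum in the definition \eqref{envelope op} of $S[u](p)$ to be \emph{attained}: for any $p$ and any minimizing sequence of triples $\{(c_i^k, p_i^k)\}$, the contribution $c_i^k u(p_i^k)$ stays bounded above (by, say, $u(p)$ plus one), so coercivity prevents $|p_i^k|$ from blowing up unless the corresponding weight $c_i^k \to 0$; passing to a subsequence, the surviving points converge and the degenerate ones drop out, and lower semicontinuity of $u$ gives that the limiting configuration is still admissible (it still lies on $\H_p$, sums to $p$ with weights in $[0,1]$ summing to $1$) and does no worse. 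This attainment step, made uniform in $p$ near $\bar p$, is what replaces the easy compactness argument available in the Euclidean case.

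Next I would carry out the semicontinuity estimate itself. Fix $\bar p$ and a sequence $p_n \to \bar p$; passing to a subsequence we may assume $S[u](p_n)$ converges to $\ell := \liminf_n S[u](p_n)$, and by the previous step choose optimal triples $(c_i^n, p_i^n)$ with $p_i^n \in \H_{p_n}$, $\sum_i c_i^n p_i^n = p_n$, $\sum_i c_i^n = 1$, and $S[u](p_n) = \sum_i c_i^n u(p_i^n)$. Since $p_n \to \bar p$ the right-invariant— rather, the horizontal planes $\H_{p_n} \to \H_{\bar p}$ in the obvious sense (via the explicit equation \eqref{eq plane}), and again coercivity keeps the $p_i^n$ bounded whenever $c_i^n$ stays bounded below. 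Extracting a further subsequence, $c_i^n \to \bar c_i$ and (for indices with $\bar c_i > 0$) $p_i^n \to \bar p_i \in \H_{\bar p}$, with $\sum_i \bar c_i \bar p_i = \bar p$ and $\sum_i \bar c_i = 1$. Then
\[
\ell = \lim_{n\to\infty} \sum_i c_i^n u(p_i^n) \ge \sum_i \bar c_i \liminf_{n\to\infty} u(p_i^n) \ge \sum_i \bar c_i u(\bar p_i) \ge S[u](\bar p),
\]
where the first inequality uses that $u$ is bounded below (so terms with $\bar c_i = 0$ contribute nonnegatively in the limit, after subtracting the lower bound), the second uses $u \in LSC(\H)$, and the last is the definition of $S[u](\bar p)$ as an infimum over admissible configurations. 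This proves $S[u] \in LSC(\H)$.

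The main obstacle is the attainment-and-compactness step: because the minimization in \eqref{envelope op} is constrained to the horizontal plane $\H_p$, one cannot simply invoke Carath\'eodory-type compactness as in \cite{ALL}, and one must handle weights $c_i^n$ that degenerate to $0$ while the corresponding $p_i^n$ may run off to infinity — precisely the mechanism exploited in Example \ref{convexify ex1}. The coercivity hypothesis \eqref{coercive} is exactly what rules this out quantitatively: if $c_i^n |p_i^n| \to \infty$ then $c_i^n u(p_i^n)/ (c_i^n|p_i^n|) \to \infty$ by \eqref{coercive}, forcing $\sum_i c_i^n u(p_i^n) \to \infty$ and contradicting boundedness of $S[u](p_n)$. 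Care is needed to phrase this so it applies uniformly for $p_n$ in a neighborhood of $\bar p$, but no delicate estimate beyond \eqref{coercive} and the lower bound on $u$ is required.
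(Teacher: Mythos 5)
Your argument is correct and follows essentially the same route as the paper's: extract (near-)minimizing configurations for $S[u](p_n)$, use coercivity to get compactness, pass to the limit using the lower semicontinuity of $u$ and the continuity of $p\mapsto \H_p$, and conclude from the definition of $S[u](\bar p)$ as an infimum. The paper works with $1/j$-almost-minimizers rather than first proving attainment and simply asserts that coercivity keeps all the $p_i^n$ bounded; your more careful treatment of indices with $\bar c_i=0$ is sound, provided you observe that coercivity actually forces $c_i^n|p_i^n|\to 0$ (not merely boundedness of $c_i^n|p_i^n|$, which is all your final remark establishes) whenever $|p_i^n|\to\infty$, since $c_i^n u(p_i^n)\geq m(|p_i^n|)\,c_i^n|p_i^n|$ with $m(R)\to\infty$ — this is needed so that the constraint $\sum_i \bar c_i\bar p_i=\bar p$ survives in the limit.
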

\begin{proof}
Without loss of generality, we may only show that 
\[
\liminf_{p\to 0} S[u](p)\geq S[u](0).
\]
For any sequence $\{p_j\}\subset \H$ with $p_j\to 0$ as $j\to \infty$, there exist $p_{ij}\in \H_{p_j}$ and $c_{ij}\in [0, 1]$ $(i=1, 2, 3)$ for each $j$ such that 
\begin{equation}\label{lsc1}
\sum_{i=1, 2, 3} c_{ij}=1, \quad \sum_{i=1, 2, 3}c_{ij} p_{ij}=p_j, \quad\text{and}\quad
\sum_{i=1, 2, 3} c_{ij} u(p_{ij})\leq S[u](p_j)+{1\over j}.
\end{equation}
Thanks to the coercivity condition \eqref{coercive}, we see that $\{p_{ij}\}_{i=1, 2, 3}$ are bounded uniformly in $j$. We thus can take a subsequence (still indexed by $j$ for simplicity) such that as $j\to \infty$
\[
c_{ij}\to \ol{c}_i\in [0, 1], \quad p_{ij}\to \ol{p}_i\in \H.
\]
It follows from \eqref{lsc1} that
\[
\sum_{i} \ol{c}_i=1, \quad \sum_{i} \ol{c}_i \ol{p}_i=0,
\] 
and, by lower semicontinuity of $u$, 
\[
\sum_{i=1, 2, 3} \ol{c}_{i} u(\ol{p}_{i})\leq \liminf_{j\to \infty} S[u](p_j).
\]

Moreover, by the locally uniform continuity of the horizontal plane $\H_p$ with respect to $p$, we have 
$\ol{p}_i\in \H_0$. By definition of $S$, we thus have
\[
S[u](0)\leq \liminf_{j\to \infty} S[u](p_j).
\]
This concludes the proof, since the converging sequence $\{p_j\}$ is taken arbitrarily. 
\end{proof}

 The following lemma is thus an immediate consequence. 
\begin{lem}[Preservation of coercivity]\label{lem coercive}
Suppose that $u\in C(\H)$ satisfies \eqref{coercive}. Let $S$ be given by \eqref{envelope op}. Then $S[u]$ also satisfies \eqref{coercive}.
\end{lem}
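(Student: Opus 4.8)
The statement to be proven is Lemma \ref{lem coercive}: if $u\in C(\H)$ satisfies the coercivity condition \eqref{coercive}, then $S[u]$ satisfies \eqref{coercive} as well. The plan is to exploit the sandwich inequality $\inf_\H u \le S[u] \le u$ together with a lower bound on $S[u](p)$ obtained from the defining constraints in \eqref{envelope op}. Since $S[u]\le u$ gives nothing useful for coercivity (it is an upper bound), the real content is a \emph{lower} bound: I need to show $S[u](p)/|p| \to \infty$ as $|p|\to\infty$, which requires showing that every admissible convex combination $\sum_i c_i u(p_i)$ appearing in \eqref{envelope op} is bounded below by something growing superlinearly in $|p|$.

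First I would record the following reformulation of \eqref{coercive}: it is equivalent to the existence, for every $M>0$, of a constant $C_M\ge 0$ such that $u(q)\ge M|q| - C_M$ for all $q\in\H$. Indeed, coercivity implies that outside a ball $|q|\ge R_M$ one has $u(q)\ge M|q|$, and on the compact ball $|q|\le R_M$ the continuous function $u$ is bounded below, say by $-C_M'$, so taking $C_M = C_M' + M R_M$ works; the converse is immediate. This affine-minorant form is the convenient one because it is preserved under convex combinations. Next, fix $p\in\H$ with $p=(x_p,y_p,z_p)$ and let $c_i\in[0,1]$, $p_i\in\H_p$ with $\sum_i c_i=1$, $\sum_i c_i p_i = p$ be any admissible choice. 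Using the affine minorant with a parameter $M$ to be chosen,
\[
\sum_i c_i u(p_i) \ge \sum_i c_i\bigl(M|p_i| - C_M\bigr) = M\sum_i c_i |p_i| - C_M \ge M\Bigl|\sum_i c_i p_i\Bigr| - C_M = M|p| - C_M,
\]
by convexity of the Euclidean norm $|\cdot|$. Taking the infimum over all admissible choices yields $S[u](p)\ge M|p| - C_M$ for every $M>0$ and every $p\in\H$. Dividing by $|p|$ and letting $|p|\to\infty$ gives $\liminf_{|p|\to\infty} S[u](p)/|p| \ge M$ for every $M$, hence $S[u](p)/|p|\to\infty$, which is exactly \eqref{coercive} for $S[u]$. (One should also note $S[u]$ is real-valued and bounded below, which follows from $\inf_\H u \le S[u]$.)

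\textbf{Main obstacle.} There is essentially no hard step here — the lemma is an immediate consequence of the convexity of the Euclidean norm together with the affine-minorant reformulation of coercivity, which is why the excerpt flags it as ``an immediate consequence.'' The only point that deserves a moment's care is the reformulation of \eqref{coercive} into the affine lower bound $u(q)\ge M|q|-C_M$; once that is in hand, Jensen's inequality for $|\cdot|$ does all the work, and the constraint $p_i\in\H_p$ plays no role whatsoever (the argument would work verbatim with $p_i$ ranging over all of $\H$). I would therefore keep the write-up to a few lines, stating the reformulation, the one-line computation above, and the conclusion.
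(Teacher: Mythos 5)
Your proposal is correct and follows essentially the same route as the paper: both rewrite \eqref{coercive} as the family of affine minorants $u\geq C_1|p|-C_2$ and then observe that such a Euclidean-convex minorant survives the convex combinations in \eqref{envelope op}, giving $S[u]\geq C_1|p|-C_2$ and hence coercivity by the arbitrariness of $C_1$. The only cosmetic difference is that you carry out the Jensen/triangle-inequality step explicitly, whereas the paper phrases it as the minorant being ``convex in the Euclidean sense and therefore h-convex.''
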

\begin{proof}
Note that the condition \eqref{coercive} implies that for each $C_1>0$, there exist $R>0$ and $C_2>0$ depending on $R$ such that
\[
u(p)\geq C_1|p|-C_2
\]
for all $p\in \H$.  It is clear that $S[u](p)\geq C_1|p|-C_2$ for all $p\in\H$, since the right hand side is convex in the Euclidean sense and therefore h-convex in $H$. It follows that
\[
\liminf_{R'\to \infty}\inf_{|p|\geq R'}{S[u](p)\over |p|}\geq C_1,
\]
which yields the coercivity of $S[u]$ thanks to the arbitrariness of $C_1>0$. 
\end{proof}

We finally remark that it is possible to define a right-invariant version of the operator $S$ as below. For any $u\in C(\H)$ bounded below, let $\St$ be  given by \eqref{envelope op right}. It is easily seen that the properties of $S$ above hold also for $\St$ via analogous arguments above.
In particular, $\St[u]$ is continuous and coercive in the sense of \eqref{coercive} in $\H$ provided that $u$ is continuous and coercive in the same sense.

\subsection{Iterated convexification}\label{sec:convexify2}
One may expect that $S[u]$ is h-convex in $\H$ for any $u\in C(\H)$, but it turns out to be false in general. An example similar to Example \ref{convexify ex1} can be easily built as below. 
\begin{example}[Failure of h-convexification]\label{convexify ex2}
Consider $p=(0, 0, 0)$ and $h=(1, 0, 0)\in \H_0$. Let 
\[
p_1=(1, 1, 1/ 2), \quad p_2=(1, -1, -1/2),
\]
\[
p_3=(-1, -1, 1/2), \quad p_4=(-1, 1, -1/2).
\]
Note that $p_i\notin \H_0$ for all $i=1, ..., 4$. We thus can take $u\in C(\H)$ such that $u(p_1)=u(p_2)=u(p_3)=u(p_4)=0$ and $u(p)=1$ for all $p\in \H_0$.  

Since $p_1, p_2\in \H_{h}$ and $p_3, p_4\in \H_{h^{-1}}$, by definition \eqref{envelope op}, we have 
\[
S[u](h)\leq {1\over 2} u(p_1)+{1\over 2} u(p_2), \quad S[u](h^{-1})\leq {1\over 2} u(p_3)+{1\over 2} u(p_4).
\]
On the other hand, since $u\equiv 1$ in $\H_0$, we get
\[
S[u](0)\geq 1. 
\]
It follows that 
\[
S[u](h)+S[u](h^{-1})<2S[u](0),
\]
which states that $S[u]$ is not h-convex at the origin. 

The function $u$ in this example is not coercive. However, it is not difficult to raise the values of $u$ near space infinity without influencing much the effect of the operator $S$ on $u$ at those points we are interested in. A more explicit example will given in Example \ref{ex:failure with coercivity} of Section \ref{sec:example}.
\end{example}

Although the operator $S$ does not give us the convex envelope immediately, we may iterate it and  passing to the limit.  In other words, we take
\begin{equation}\label{envelope2}
U(p):=\lim_{n\to \infty} S^n[u](p)
\end{equation}
for $u\in C(\H)$ and any $p\in \H$.  It is easily seen that the pointwise limit of $S^n[u]$ actually exists, thanks to the monotonicity $S[u]\leq u$ for $u\in USC(\H)$. In addition, we have 
\begin{equation}\label{h-convexify eq0}
U\leq u \quad \text{ in $\H$.}
\end{equation}

\begin{lem}[H-convexity of $U$]\label{lem h-convexify}
Suppose that $u\in C(\H)$ is bounded from below. Let $U$ be given as in \eqref{envelope2}. Then $U$ is h-convex in $\H$.  In particular, $U$ is locally Lipschitz with respect to the gauge metric in $\H$.
\end{lem}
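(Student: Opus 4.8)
The statement to prove is that $U = \lim_{n\to\infty} S^n[u]$ is h-convex in $\H$, with local Lipschitz regularity then following immediately from Theorem \ref{thm lip}. The natural approach is to combine three ingredients: (i) the monotone pointwise convergence $S^n[u]\downarrow U$, which already exists by the remark preceding the lemma; (ii) the upper semicontinuity preserved along the iteration (Lemma \ref{lem usc}), together with a Dini-type argument to upgrade the pointwise limit to something we can test against smooth functions; and (iii) the fact that $S[U]=U$, i.e. $U$ is a fixed point of $S$, which is exactly the statement that $U$ is h-convex since $S[v]=v$ iff $v$ is h-convex. The bulk of the work is establishing $S[U]=U$.

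First I would record that each $S^n[u]$ lies in $USC(\H)$ and is bounded below by $\inf_\H u$, so $U\in USC(\H)$ as a decreasing limit of upper semicontinuous functions, and $\inf_\H u\le U\le u$. The inequality $S[U]\le U$ is immediate from monotonicity of $S$: since $U\le S^n[u]$ for every $n$, we get $S[U]\le S[S^n[u]] = S^{n+1}[u]$, and letting $n\to\infty$ gives $S[U]\le U$. So the real content is the reverse inequality $S[U]\ge U$, equivalently: for every $p\in\H$ and every choice of $c_i\in[0,1]$, $p_i\in\H_p$ with $\sum_i c_i=1$ and $\sum_i c_i p_i = p$, one has $\sum_i c_i U(p_i)\ge U(p)$. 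Fix such data. For each $n$ we know $\sum_i c_i S^{n}[u](p_i)\ge S^{n+1}[u](p)$ by the definition \eqref{envelope op} of $S$ applied at $p$ with the admissible configuration $(c_i,p_i)$. Now let $n\to\infty$: the right-hand side converges to $U(p)$, and on the left each term $S^n[u](p_i)\downarrow U(p_i)$, so $\sum_i c_i S^n[u](p_i)\to \sum_i c_i U(p_i)$. This yields $\sum_i c_i U(p_i)\ge U(p)$, hence $S[U]\ge U$, and therefore $S[U]=U$, which is precisely h-convexity of $U$.

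I should be slightly careful about one point: the passage to the limit on the left-hand side is legitimate because there are only finitely many terms (three), each a pointwise decreasing limit, so no uniform convergence is needed — this is where the finite-sum structure of \eqref{envelope op} in the Heisenberg setting is convenient. The only genuinely delicate step is verifying that the definition of $S[U]$ can indeed be tested against an arbitrary admissible configuration and that the inequality $\sum_i c_i S^n[u](p_i) \ge S^{n+1}[u](p)$ holds for that fixed configuration; but this is just the definition of $S^{n+1}[u](p) = S[S^n[u]](p)$ as an infimum over all admissible configurations on $\H_p$, so it is immediate. Thus the main obstacle is essentially bookkeeping rather than a substantive difficulty. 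Finally, having shown $U$ is h-convex, Theorem \ref{thm lip} applied with $\Omega = \H$ gives that $U$ is locally bounded and locally Lipschitz with respect to the gauge metric, completing the proof.
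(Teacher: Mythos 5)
Your proof is correct and uses essentially the same mechanism as the paper: test the infimum defining $S^{n+1}[u](p)$ against a fixed admissible configuration to get $\sum_i c_i S^n[u](p_i)\geq S^{n+1}[u](p)$, then pass to the limit term by term using the monotone convergence of the finitely many terms. The only difference is cosmetic — the paper applies this solely to the symmetric midpoint configurations $p_1=p\cdot h$, $p_2=p\cdot h^{-1}$, $c_1=c_2=\tfrac12$, which yields the defining inequality \eqref{h-convex eqn} directly, whereas you prove the (slightly stronger) fixed-point identity $S[U]=U$ for all three-point configurations and then specialize; both conclude with Theorem \ref{thm lip} for the Lipschitz statement.
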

\begin{proof}
Fix $p\in \H$ and $h\in \H_0$. By definition, for any $\vep>0$, there exists $n>0$ sufficiently large such that 
\begin{equation}\label{h-convexify eq1}
U(p\cdot h)\geq S^n[u](p\cdot h)-\vep, \quad  U(p\cdot h^{-1})\geq S^n[u](p\cdot h^{-1})-\vep.
\end{equation}
In view of \eqref{envelope op}, we have   
\[
S^n[u](p\cdot h)+S^n[u](p\cdot h^{-1})\geq 2S^{n+1}[u](p),
\]
which implies by monotonicity of $S$ that 
\[
S^n[u](p\cdot h)+S^n[u](p\cdot h^{-1})\geq 2U(p). 
\]
Combining this relation with \eqref{h-convexify eq1}, we deduce that 
\[
U(p\cdot h)+U(p\cdot h^{-1})\geq 2U(p)-2\vep.
\]
We conclude that $U$ is h-convex by letting $\vep\to 0$.

The Lipschitz regularity is an immediate consequence of Theorem \ref{thm lip}.
\end{proof}

\begin{rmk}\label{rmk uniform convergence}
Assuming in addition that $u\in C(\H)$ is coercive in the sense of \eqref{coercive}, we have $S^n[u]$ is continuous in $\H$. Since $S^n[u]$ is a monotone sequence of continuous functions and $U$ is continuous in $\H$, by Dini's theorem, we therefore obtain locally uniform convergence of $S^n[u]$ to $U$ in $\H$ as $n\to \infty$. 
\end{rmk}

We finally present our main theorem of this section, showing that $U$ and $\env$ are the same for any given $u\in C(\H)$.

\begin{thm}[Characterization by iterated convexification]\label{thm equivalence}
Suppose that $u\in C(\H)$ is bounded from below. Let $\env$ and $U$ be given by \eqref{envelope1} and \eqref{envelope2}. Then $\env\equiv U$ in $\H$.  
\end{thm}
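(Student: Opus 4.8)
The plan is to prove the two inequalities $\env \le U$ and $U \le \env$ separately in $\H$.

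The inequality $\env \le U$ is the easier direction. By Lemma~\ref{lem h-convexify}, $U$ is h-convex, and by \eqref{h-convexify eq0} we have $U \le u$ in $\H$. Thus $U$ is an h-convex function majorized by $u$, so by the very definition \eqref{envelope1} of $\env$ as the \emph{greatest} such function, $U \le \env$ in $\H$. Wait --- this gives $U \le \env$, which is the reverse; so in fact this easy argument yields $U \le \env$, and the work lies in proving $\env \le U$.

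For $\env \le U$, the strategy is to show that the operator $S$ acts as a \emph{contraction toward h-convex functions from above}, meaning that any h-convex function $v \le u$ already satisfies $v \le S[u]$, hence $v \le S^n[u]$ for all $n$ by induction, and therefore $v \le U$ in the limit. Concretely: fix an h-convex $v$ with $v \le u$ in $\H$. Fix $p \in \H$ and take any admissible competitor in \eqref{envelope op}, i.e. $c_i \in [0,1]$, $p_i \in \H_p$ with $\sum_i c_i = 1$ and $\sum_i c_i p_i = p$. The key geometric observation is that since all $p_i$ lie on the common horizontal plane $\H_p$, which is an affine plane in $\R^3$ on which the group translations act like ordinary Euclidean translations, the restriction of the h-convex function $v$ to $\H_p$ is convex in the ordinary (Euclidean, two-dimensional) sense. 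This is exactly the content of Definition~\ref{defi h-convex}: for $p', p'' \in \H_p$ with midpoint $p$, writing $p' = p \cdot h^{-1}$, $p'' = p\cdot h$ for suitable $h \in \H_0$, h-convexity gives $v(p') + v(p'') \ge 2v(p)$; iterating along the two-dimensional affine structure of $\H_p$ (or invoking the equivalence with v-convexity and restricting the Hessian inequality to the plane) yields Jensen's inequality $\sum_i c_i v(p_i) \ge v\big(\sum_i c_i p_i\big) = v(p)$. Combining with $v \le u$ we get $\sum_i c_i u(p_i) \ge \sum_i c_i v(p_i) \ge v(p)$; taking the infimum over all competitors gives $S[u](p) \ge v(p)$. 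Since $S$ is monotone (it decreases $u$ and preserves the ordering), induction gives $S^n[u] \ge v$ for every $n$, and passing to the limit $U \ge v$ in $\H$. Taking the supremum over all such $v$ in \eqref{envelope1} yields $U \ge \env$, i.e. $\env \le U$ in $\H$.

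Together the two inequalities give $\env \equiv U$ in $\H$. The main obstacle is the geometric lemma that an h-convex function restricts to a Euclidean-convex function on each horizontal plane $\H_p$: one must check carefully that a dyadic/midpoint convexity inequality valid for all $h\in\H_0$ with the endpoints in $\H_p$ genuinely captures convexity in all directions of the two-dimensional plane $\H_p$, and that it upgrades from midpoints to arbitrary convex combinations of three points --- this uses continuity (local Lipschitz regularity from Theorem~\ref{thm lip}) of $v$ together with density of dyadic rationals, or alternatively the v-convexity characterization. Everything else --- monotonicity of $S$, the induction, and the limiting argument --- is routine given the lemmas already established.
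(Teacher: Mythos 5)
Your two-inequality architecture is exactly the paper's: $U\le \env$ follows from Lemma~\ref{lem h-convexify} and \eqref{h-convexify eq0} together with the definition \eqref{envelope1}, and $\env\le U$ is obtained by showing that any h-convex $v\le u$ satisfies $v\le S[u]$ and iterating (the paper runs this with $v=\env$ itself, using $S[\env]=\env$). The problem is the step you yourself single out as the crux. Your ``key geometric observation'' --- that an h-convex function restricts to a Euclidean-convex function on each horizontal plane $\H_p$ --- is false, and the paper's own example refutes it: $v(x,y,z)=x^2y^2+2z^2$ is h-convex, but its restriction to $\H_0=\{z=0\}$ is $(x,y)\mapsto x^2y^2$, whose Hessian has determinant $-12x^2y^2<0$. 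Both of your suggested justifications break for the same structural reason: for $q\in\H_p$ with $q\ne p$, the midpoint inequality \eqref{h-convex eqn} at $q$ pairs points of $\H_q$, and $\H_q\ne\H_p$ (e.g.\ $\H_{(1,0,0)}=\{-y+2z=0\}$, not $\{z=0\}$), so Definition~\ref{defi h-convex} only yields convexity of $v|_{\H_p}$ along the lines through the center $p$; likewise $(\nabla_H^2v)^\star\ge0$ does not control the Euclidean Hessian of the restriction. And radial convexity from $p$ alone does not imply the Jensen inequality $v(p)\le\sum_ic_iv(p_i)$: for instance $w(x,y)=(x^3-3xy^2)/(x^2+y^2)$ is linear, hence convex, on every line through the origin, yet it averages to $-1$ over the three unit vectors at angles $\pi/3,\pi,5\pi/3$, whose mean is the origin where $w=0$.

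The inequality you actually need, $v(p)\le\sum_ic_iv(p_i)$ for $p_i\in\H_p$ with $\sum_ic_ip_i=p$ (equivalently, $S[v]=v$ for h-convex $v$), is true, but the right route is through the horizontal subdifferential: an h-convex function admits at each $p$ a vector $\xi\in\R^2$ with $v(p\cdot h)\ge v(p)+\la \xi,(x_h,y_h)\ra$ for all $h\in\H_0$. Since $h\mapsto p\cdot h$ is an affine bijection of $\H_0$ onto $\H_p$, this provides an affine minorant of $v|_{\H_p}$ touching at $p$, and averaging it over the $p_i$ gives the Jensen inequality. The existence of such a supporting horizontal plane is a known but genuinely nontrivial result (see \cite{DGN1,BaCaKr}); the paper itself passes over this point with the words ``by the h-convexity of $\env$''. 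So your outline is the right one and matches the paper's, but the one step that requires a real argument is precisely the one for which your proposed justification does not work.
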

\begin{proof}
We now prove the equivalence $\env=U$. Since $\env\leq u$, by the h-convexity of $\env$ we thus have for any $p\in \H$
\[
\env(p)\leq \sum_i c_i \env(p_i)\leq  \sum_i c_i u(p_i)
\]
for all $0\leq c_i\leq 1$, $p_i\in \H_p$ (i=1, 2, 3) with $\sum_i c_i=1$, $p_i\in \H$ and $\sum_i c_ip_i =p$. It follows that $\env\leq S[u]$ in $\H$. 

Since $S[\env]=\env$, iterating the argument above yields that 
\[
\env\leq S^n[u],
\]
which implies that $\env\leq U$.

Since $\env$ is defined to be the largest h-convex function below $u$, the inequality $\env\geq U$ is an immediate consequence of Lemma \ref{lem h-convexify} together with \eqref{h-convexify eq0}.
\end{proof}
\begin{rmk}\label{rmk uniforml convergence2}
Since $\env$ and $U$ are equivalent, from now on, {we also use $\env$ to denote the limit of $S^n[u]$ for any given function $u\in C(\H)$ bounded below}. In particular, in the presence of the coercivity \eqref{coercive} of $u$,  we have $S^n[u]\to U$ locally uniformly as $n\to \infty$ due to Remark \ref{rmk uniform convergence}. 
\end{rmk}

\subsection{Symmetry with respect to $z$-axis}\label{sec:axisymmetry}
We consider a special case when $u$ satisfies a symmetry condition. We say $u$ is symmetric with respect to the $z$-axis if \eqref{zsym} holds. 
We can show that the operator $S$ preserves this symmetry condition. 

\begin{lem}[Preservation of symmetry]\label{lem axisym1}
Suppose that $u\in C(\H)$ is bounded from below and symmetric with respect to $z$-axis.  
Then $S[u]$ given by \eqref{envelope op} is also symmetric with respect to $z$-axis. In particular, $\env$ satisfies the same symmetry condition as well. 
\end{lem}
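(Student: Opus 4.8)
The plan is to exploit a symmetry of the Heisenberg group structure: the map $\sigma(x,y,z) = (-x,-y,z)$ is a group automorphism of $\H$. First I would record this: for $p = (x_p, y_p, z_p)$ and $q = (x_q, y_q, z_q)$, one checks directly from the multiplication law that $\sigma(p \cdot q) = \sigma(p) \cdot \sigma(q)$, since the first two coordinates just flip sign and the $z$-component correction $\tfrac12(x_p y_q - x_q y_p)$ is invariant under $(x_p, y_p, x_q, y_q) \mapsto (-x_p, -y_p, -x_q, -y_q)$. In particular $\sigma$ fixes $\H_0$ setwise, and hence $\sigma(\H_p) = \H_{\sigma(p)}$: indeed $\H_p = \{p \cdot h : h \in \H_0\}$, so $\sigma(\H_p) = \{\sigma(p)\cdot\sigma(h) : h \in \H_0\} = \{\sigma(p)\cdot h' : h' \in \H_0\} = \H_{\sigma(p)}$. (Alternatively, one can verify this straight from the plane equation \eqref{eq plane}: replacing $(x_p,y_p)$ by $(-x_p,-y_p)$ and $(x,y)$ by $(-x,-y)$ leaves $y_p x - x_p y + 2z - 2z_p = 0$ unchanged.) Note also that $\sigma$ is linear, so it commutes with convex combinations: $\sigma(\sum_i c_i p_i) = \sum_i c_i \sigma(p_i)$.

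Next I would run the change of variables in the definition \eqref{envelope op} of $S[u]$. Fix $p = (x_p, y_p, z_p)$ and write $\sigma p := \sigma(p) = (-x_p, -y_p, z_p)$. Given any admissible triple $(c_i, p_i)$ for the infimum defining $S[u](\sigma p)$ — that is, $c_i \in [0,1]$, $p_i \in \H_{\sigma p}$, $\sum_i c_i = 1$, $\sum_i c_i p_i = \sigma p$ — the triple $(c_i, \sigma p_i)$ is admissible for the infimum defining $S[u](p)$: we have $\sigma p_i \in \sigma(\H_{\sigma p}) = \H_{\sigma\sigma p} = \H_p$ (using $\sigma \circ \sigma = \id$), and $\sum_i c_i \sigma p_i = \sigma(\sum_i c_i p_i) = \sigma(\sigma p) = p$. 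Moreover, by the assumed symmetry \eqref{zsym} of $u$, $u(\sigma p_i) = u(p_i)$, so $\sum_i c_i u(\sigma p_i) = \sum_i c_i u(p_i)$. This shows $S[u](p) \le S[u](\sigma p)$. Applying the same reasoning with the roles of $p$ and $\sigma p$ interchanged (again using $\sigma\circ\sigma = \id$) gives the reverse inequality, hence $S[u](p) = S[u](\sigma p)$ for all $p$, which is exactly the symmetry of $S[u]$ with respect to the $z$-axis.

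For the final assertion about $\env$, I would simply iterate. By what has just been proved, $S$ maps the class of bounded-below, $z$-axis-symmetric continuous functions into itself (continuity and bounded-belowness are preserved by Lemma \ref{lem usc}, its lower-semicontinuity analog, and the remark that $S[u] \ge \inf_\H u$; here $u \in C(\H)$ so $S[u]$ is at least upper semicontinuous, and one only needs pointwise symmetry to pass to the limit). Hence every iterate $S^n[u]$ is symmetric with respect to the $z$-axis, and since $\env(p) = \lim_{n\to\infty} S^n[u](p)$ pointwise by Theorem \ref{thm equivalence} and Remark \ref{rmk uniforml convergence2}, the pointwise limit $\env$ inherits the identity $\env(p) = \env(\sigma p)$.

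There is no serious obstacle here; the only thing requiring a moment's care is the bookkeeping that $\sigma$ is a genuine automorphism sending horizontal planes to horizontal planes and commuting with convex combinations, after which the symmetry of $S[u]$ is a one-line change of variables and the symmetry of $\env$ follows by passing to the pointwise limit.
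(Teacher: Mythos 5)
Your proposal is correct and follows essentially the same route as the paper: reflect the competitors $p_i\mapsto(-x_i,-y_i,z_i)$, check that the reflected points lie on the horizontal plane through the reflected base point (you do this via the automorphism property of $\sigma$, the paper directly via the plane equation \eqref{eq plane}, and you note both), use the symmetry of $u$ to match the objective values, and pass to the pointwise limit of $S^n[u]$ for the statement about $\env$. The only cosmetic difference is that you phrase the comparison of infima as a bijective change of variables, while the paper runs it with $\delta$-approximate minimizers; the content is identical.
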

\begin{proof}
Pick arbitrarily $p_0=(x_0, y_0, z_0)\in \H$ and $p_0'=(-x_0, -y_0, z_0)\in \H$.
By definition of $S[u]$, for any $\delta>0$ small, there exist $p_{i}=(x_{i}, y_{i}, z_{i})\in \H_{p_0}$ with $c_{i}\in [0, 1]$, $i=1, 2, 3$, such that
\begin{equation}\label{eq axisym1}
\sum_{i=1}^3 c_{i}=1, \quad \sum_{i=1}^3 c_{i}p_{i}=p_0
\end{equation}
and 
\begin{equation}\label{eq axisym2}
S[u](p_0)\geq \sum_{i=1}^3 c_{i} u(p_{i})-{\delta}. 
\end{equation}
Using the plane equation as in \eqref{eq plane}, we can express the relation $p_{i}=(x_{i}, y_{i}, z_{i})\in \H_{p_0}$ by 
\begin{equation}\label{eq axisym4}
y_0 x_{i}-x_0 y_{i}+2z_{i}-2z_0=0. 
\end{equation}
Set $p_{i}'=(-x_{i}, -y_{i}, z_{i})\in \H$. It is easily verified that $p_i'\in \H_{p_0'}$ for $i=1, 2, 3$. 

We can apply the symmetry of $u$ to \eqref{eq axisym2} to obtain that 
\begin{equation}\label{eq axisym3}
S[u](p_0)\geq \sum_{i=1}^3 c_{i} u(p_{i}')-\delta.
\end{equation}
By \eqref{eq axisym1}, it is also clear that 
\[
\sum_{i=1}^3 c_{i}p_{i}'=p_0'.
\]
It thus follows from \eqref{eq axisym3} that 
\[
S[u](p_0)\geq S[u](p_0')-\delta, 
\]
which implies that 
\[
S[u](p_0)\geq S[u](p_0')
\]
by letting $\delta\to 0$. Exchanging the roles of $p_0$ and $p_0'$, we obtain that $S[u](p_0)=S[u](p_0')$, which means that $S[u]$ is symmetric with respect to $z$-axis. 
As an immediate consequence of Theorem \ref{thm equivalence}, we can also deduce the symmetry of $\env$. 
\end{proof}

The additional symmetry assumption will largely facilitate our study on properties of the h-convex envelope later. A typical advantage with such symmetry is that the left and right invariant convexification actually coincide. 
\begin{thm}[Equivalence under symmetry]\label{thm axisym2}
Suppose that $u\in C(\H)$ is bounded from below and symmetric with respect to $z$-axis.  
Let $S[u]$ and $\St[u]$ be given as in \eqref{envelope op} and \eqref{envelope op right} respectively. Then $S[u]=\St[u]$ in $\H$. 
\end{thm}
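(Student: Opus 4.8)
The plan is to show the two pointwise infima in \eqref{envelope op} and \eqref{envelope op right} agree at every point by exhibiting a bijection between the competing convex combinations on the left-invariant horizontal plane $\H_{p}$ and those on the right-invariant horizontal plane $\Ht_{p}$, which carries the objective functional to itself thanks to the $z$-axis symmetry \eqref{zsym}. Fix $p=(x_p,y_p,z_p)\in\H$. Comparing the plane equations \eqref{eq plane} and \eqref{eq plane right}, the right-invariant plane $\Ht_p$ through $p$ is the reflection of the left-invariant plane $\H_p$ through $p$ across the horizontal slice $\{z=z_p\}$: explicitly, if $q=(x_q,y_q,z_q)$ satisfies $y_p x_q - x_p y_q + 2z_q - 2z_p = 0$ (so $q\in\H_p$), then $q^{\sharp}:=(x_q,y_q,2z_p-z_q)$ satisfies $y_p x_q - x_p y_q - 2(2z_p-z_q) + 2z_p = y_p x_q - x_p y_q + 2z_q - 2z_p = 0$, i.e.\ $q^{\sharp}\in\Ht_p$, and the map $q\mapsto q^{\sharp}$ is an affine involution fixing $p$.

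The key steps, in order: (i) Record the reflection identity above and verify that $q\mapsto q^{\sharp}$ preserves affine combinations (it is affine), so that if $c_i\in[0,1]$, $\sum_i c_i=1$, $p_i\in\H_p$ and $\sum_i c_i p_i = p$, then $p_i^{\sharp}\in\Ht_p$, $\sum_i c_i p_i^{\sharp}=p^{\sharp}=p$; and conversely. (ii) Relate $u(p_i^{\sharp})$ to $u(p_i)$. This is where the symmetry hypothesis enters, but a single reflection in $z$ is not enough on its own; the trick is to compose with the $z$-axis symmetry. Consider instead the map $q=(x_q,y_q,z_q)\mapsto \hat q := (-x_q,-y_q,z_q)$ applied to the whole configuration: by \eqref{zsym} we have $u(\hat q)=u(q)$. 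Under $\hat{\ }$, the left-invariant plane equation $y_p x_q - x_p y_q + 2z_q - 2z_p=0$ becomes $y_p(-x_q) - x_p(-y_q) + 2z_q - 2z_p = -(y_p x_q - x_p y_q) + 2z_q - 2z_p$, which is exactly the right-invariant plane equation \eqref{eq plane right} for the point $\hat p = (-x_p,-y_p,z_p)$. So $p_i\in\H_p$ implies $\hat p_i \in \Ht_{\hat p}$, and $\sum_i c_i \hat p_i = \hat p$, while $\sum_i c_i u(\hat p_i) = \sum_i c_i u(p_i)$. This shows $\St[u](\hat p)\le S[u](p)$, i.e.\ $\St[u](p')\le S[u](\hat{p'})$ for all $p'$; combined with Lemma \ref{lem axisym1} (symmetry of $S[u]$) and the symmetric statement for $\St[u]$ (which holds by the analogous argument, noting $\St[u]$ is also $z$-axis symmetric), one gets $\St[u](p')\le S[u](p')$. (iii) Run the argument in the reverse direction — starting from a near-optimal configuration on $\Ht_p$ and applying $\hat{\ }$ to land on $\H_{\hat p}$ — to obtain $S[u](p')\le \St[u](p')$. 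Together these give $S[u]=\St[u]$ in $\H$.

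The main obstacle is keeping the bookkeeping straight between the two reflections ($z$-reflection through the slice $\{z=z_p\}$ versus the $xy$-reflection $\hat q$) and the two base points $p$ and $\hat p$, and confirming that the $\varepsilon$-approximate minimizers transport cleanly in both directions so that the two inequalities close up. Since $\hat{\ }$ is a linear involution of $\R^3$ that preserves the simplex constraints $c_i\in[0,1]$, $\sum c_i=1$ and the barycenter condition, there are no continuity or compactness subtleties here — unlike in Lemma \ref{lem lsc}, no coercivity is needed — so once the plane-equation identities in step (ii) are verified, the rest is a direct substitution in \eqref{envelope op} and \eqref{envelope op right}. I would also remark at the end that this equivalence is exactly what later forces $S[u]=u$ for symmetric solutions, since $\St$ has the supersolution-preserving property used in Theorem \ref{thm r-preserve}.
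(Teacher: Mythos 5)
Your overall strategy --- transport a near-minimizing configuration for \eqref{envelope op} to one for \eqref{envelope op right} by the reflection $q\mapsto\hat q=(-x_q,-y_q,z_q)$, which preserves the values of $u$ by \eqref{zsym} and preserves affine combinations --- is the same as the paper's, but step (ii) contains a sign error in the plane bookkeeping, and it is not cosmetic. Substituting $\hat q$ into \eqref{eq plane right} with base point $p$ (not $\hat p$) gives $y_p(-x_q)-x_p(-y_q)-2z_q+2z_p=-(y_px_q-x_py_q)-2z_q+2z_p$, which vanishes whenever $y_px_q-x_py_q+2z_q-2z_p=0$. Hence $q\in\H_p$ implies $\hat q\in\Ht_{p}$, \emph{not} $\hat q\in\Ht_{\hat p}$ as you assert; in fact $\Ht_{\hat p}=\H_p$, a different plane from $\Ht_p$ unless $x_p=y_p=0$. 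Concretely, for $p=(1,0,0)$ one has $\H_p=\{z=y/2\}$ and $\Ht_p=\{z=-y/2\}$; the point $p_1=(1,2,1)\in\H_p$ reflects to $\hat p_1=(-1,-2,1)$, which lies on $\Ht_p$ but not on $\Ht_{\hat p}=\{z=y/2\}$. The expression you obtained is the left-invariant equation \eqref{eq plane} for $\hat p$ evaluated at $q$, which only re-proves Lemma \ref{lem axisym1}.

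Once corrected, the transported configuration has its points on $\Ht_p$ but barycenter $\sum_i c_i\hat p_i=\hat p\neq p$, so it is admissible in \eqref{envelope op right} at neither $p$ (wrong barycenter) nor $\hat p$ (wrong plane), and the inequality $\St[u](\hat p)\le S[u](p)$ driving steps (ii)--(iii) is not established; invoking the symmetry of $S[u]$ and $\St[u]$ afterwards cannot repair this, because there is no inequality left to symmetrize. You should be aware that this is exactly the delicate point in the paper's own proof, which records $p_i'\in\Ht_{p_0}$ correctly but then asserts $\sum_i c_ip_i'=p_0$, whereas the reflection sends the barycenter to $p_0'$. What the argument needs is an affine bijection of $\H_p$ onto $\Ht_p$ that fixes $p$ \emph{and} preserves the values of $u$: your map $\sharp$ has the first two properties, the reflection $\hat{\ }$ has the first and the third, and their composition maps $\H_p$ back onto $\H_p$ while fixing $\hat p$ rather than $p$. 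Until such a map (or some genuinely different mechanism) is produced, the identity $S[u]=\St[u]$ is not proved, and I would encourage you to test the statement itself on a symmetric $u$ whose restriction $a(x,y)=u(x,y,y/2)$ to $\H_{(1,0,0)}$ has deep wells near $(1,\pm2)$, since $S[u](1,0,0)$ and $\St[u](1,0,0)$ then compute the planar convexification of $a$ at the two distinct points $(1,0)$ and $(-1,0)$.
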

\begin{proof}
 Fix $p_0=(x_0, y_0, z_0)\in \H$ arbitrarily. As in the proof of Lemma \ref{lem axisym1}, we can take for any $\delta>0$ small $p_{i}=(x_{i}, y_{i}, z_{i})\in \H_{p_0}$ and $c_{i}\in [0, 1]$ ($i=1, 2, 3$) such that \eqref{eq axisym1} and \eqref{eq axisym2} hold. We thus still have \eqref{eq axisym4}. 

Let us again take $p_{i}'=(-x_{i}, -y_{i}, z_{i})$. 
Then the symmetry of $u$ and \eqref{eq axisym2} yield that 
\begin{equation}\label{eq axisym5}
S[u](p_0)\geq \sum_{i=1}^3 c_{i} u(p_{i}')-\delta.
\end{equation}
Moreover, a direct calculation with the choices of $p_i'$ enables us to get
\[
y_0x_{i}'-x_0y_{i}'=-y_0x_{i}+x_0y_{i},
\]
which, in view of \eqref{eq axisym4}, implies that
\[
y_0x_{i}'-x_0y_{i}'+2z_{i}-2z_0=0.
\]
This amounts to saying that $p_{i}'\in \Ht_{p_0}$. 
Since we also have 
\[
\sum_{i=1}^3 c_i p_{i}'=p_0, 
\]
by \eqref{envelope op right} we obtain
\[
S[u](p_0)\geq \St[u](p_0)-\delta.
\]
Sending $\delta\to 0$, we are led to $S[u](p_0)\geq \St[u](p_0)$. As a parallel argument yields that $\St[u](p_0)\leq S[u](p_0)$, we complete the proof.
\end{proof}

As a result of Theorem \ref{thm axisym2}, we immediately obtain the following.
\begin{prop}\label{prop axisym3}
Suppose that $u\in C(\H)$ is bounded from below and symmetric with respect to $z$-axis. Then $u$ is h-convex if and only if it is right invariant h-convex. 
\end{prop}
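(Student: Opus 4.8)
The plan is to deduce Proposition~\ref{prop axisym3} directly from Theorem~\ref{thm axisym2} together with the characterizations of h-convexity and right invariant h-convexity via the operators $S$ and $\St$. First I recall the elementary fact, already noted in Section~\ref{sec:convexify1}, that a function $u\in C(\H)$ bounded below is h-convex if and only if $S[u]=u$ in $\H$, and that symmetrically $u$ is right invariant h-convex if and only if $\St[u]=u$ in $\H$ (the right invariant analogue of $S$ enjoys the same basic properties by the remark closing Section~\ref{sec:convexify1}, and the equivalence of the two notions of right invariant convexity follows from Remark~\ref{rmk right convex}).

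The argument is then immediate. Assume $u$ is symmetric with respect to the $z$-axis. By Theorem~\ref{thm axisym2} we have $S[u]=\St[u]$ in $\H$. Hence
\[
u \text{ is h-convex} \iff S[u]=u \iff \St[u]=u \iff u \text{ is right invariant h-convex,}
\]
which is exactly the claimed equivalence. One should verify the $\St$-side of the dictionary carefully, namely that $\St[u]=u$ characterizes right invariant h-convexity; this is the mirror image of the statement ``$S[u]=u \iff u$ h-convex'' and is proved by the same reasoning applied to the right invariant horizontal planes $\Ht_p$ in \eqref{right plane}: if $u$ is right invariant h-convex then for any $p$ and any convex combination of points $p_i\in\Ht_p$ summing to $p$ one gets $u(p)\le\sum_i c_i u(p_i)$ by iterating \eqref{h-convex eqn} in its right invariant form, so $\St[u]\ge u$, while $\St[u]\le u$ always; conversely $\St[u]=u$ gives back the right invariant version of \eqref{h-convex eqn} by choosing $p_1=h\cdot p$, $p_2=h^{-1}\cdot p$, $c_1=c_2=1/2$, $p_3=p$ (or $c_3=0$).

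The only mild subtlety — and the main point to be careful about — is that Definition~\ref{defi h-convex} requires $u$ to be upper semicontinuous (here $u\in C(\H)$, so this is fine) and that the characterization via $S$ is stated for $u$ bounded below, which is part of the hypothesis; no coercivity is needed since we only use the trivial inequalities $S[u]\le u$, $\St[u]\le u$ together with the pointwise Jensen-type inequality coming from h-convexity, not any continuity of $S[u]$. Thus there is genuinely no obstacle: the proposition is a one-line corollary of Theorem~\ref{thm axisym2} once the ``fixed point'' descriptions of the two convexity notions are in place.
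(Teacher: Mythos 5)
Your argument is correct and is exactly the paper's proof: the authors likewise deduce the proposition from Theorem \ref{thm axisym2} together with the facts that $u=S[u]$ if and only if $u$ is h-convex and $u=\St[u]$ if and only if $u$ is right invariant h-convex. The extra verification you supply of the $\St$-side of this dictionary is a detail the paper leaves as ``clear,'' not a departure in method.
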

The proof is based on the fact that $u=S[u]$ (resp., $u=\tilde{S}[u]$) if and only if $u$ is h-convex (resp. right invariant h-convex). 

\subsection{Examples of h-convex envelopes}\label{sec:example}

Let us give more concrete examples of the h-convex envelope and the operator $S$. We start with a simple example, for which $u$ can be convexified by the operator $S$ in one step.  

\begin{example}\label{ex:one step}
Let $f\in C(\R)$ be given by 
\[
f(t)=(|t|-1)^2, \quad t\in \R.
\]
Consider 
\[
u(p)=f\left(x^2+y^2+z^2\right), \quad  p=(x, y, z)\in \H. 
\]
It is clear that $u$ is coercive in $\H$.  One may guess that
\begin{equation}\label{eq ex1a}
\env(p)=\begin{cases} 0 & \text{if } x^2+y^2+z^2\leq 1,\\
u(p) &  \text{if } x^2+y^2+z^2>1.  
\end{cases}
\end{equation}
In other words, we expect that the h-convex envelope of $u$ is
\[
U(p)=F_E\left(x^2+y^2+z^2\right),\quad  p=(x, y, z)\in \H,
\]
where $F_E$ denotes the Euclidean convex envelope of $f$ in $\R$. 
In fact, this relation does hold. Note first that the right hand side of \eqref{eq ex1a} is h-convex in $\H$. 
Moreover, we have $S[u]=\env$ in $\H$ in this case. Indeed, since $u$ takes a minimum value $0$ and the minimizers of $u$ form a closed surface 
\[
x^2+y^2+z^2=1,
\]
for any $p=(x, y, z)\in \H$ such that $x^2+y^2+z^2<1$, the horizontal plane at $p$ and the closed surface must intersect at a closed curve on the plane. One therefore can take on the intersection three points whose convex combination coincides with $p$. This immediately yields that 
\[
S[u](p)=0
\]
for any $p=(x, y, z)\in \H$ such that $x^2+y^2+z^2<1$. It is clear that $S[u]=u$ at the rest of the points. 
\end{example}

\begin{example}\label{ex:two step}
We next give an example, showing that one sometimes needs to apply the operator $S$ twice to find the envelope $\env$.  Suppose that 
\[
u(p)=(z^2-1)^2, \quad p=(x, y, z)\in \H. 
\]
In this case, we have  
\begin{equation}\label{eq two step}
\env(x, y, z)=S^2[u](x, y, z)=\begin{cases} 0 & \text{if $|z|\leq 1$,}\\ (z^2-1)^2 &\text{if $|z|>1$.}
\end{cases}
\end{equation}
In fact, using the same argument for Example \ref{ex:one step}, we get
\[
S[u](x, y, z)= \begin{cases} 
0 & \text{if $|z|\leq 1$ and $(x, y)\neq (0, 0)$,}\\
(z^2-1)^2 & \text{if $|z|>1$ or $(x, y)=(0, 0)$.}
\end{cases} 
\]
One can easily apply the operator $S$ again on $S[u]$ to obtain the envelope given in \eqref{eq two step}.
\end{example}

\begin{example}\label{ex:failure with coercivity}
Let $u: \mathbb{R}^3\to \mathbb{R}$ be defined as
\[
u(x,y,z)=(x-y)z+(x-y)^2z^2+(x^2+y^2)^2+z^2.
\]
Clearly, $u$ is bounded from below and coercive. We show that $S[u](p)$ is not $h$-convex.

Note we have $u(p)\ge 0$ for all $p\in \mathbb{H}_0$ and
\[
S[u](0)=0.
\]
Let $t>0$ and ${h_t}=(t,t,0)\in \mathbb{H}_0$. Then the horizontal plane at ${h_t}$ is the collection of points 
\[
\mathbb{H}_{{h_t}}=(t,t,0)\cdot \mathbb{H}_0=\left\{\left(t+x, t+y, \frac{t}{2}(y-x)\right): x,y\in \mathbb{R}\right\}.
\]

Choose $c_1=c_2=\frac{1}{2}$ and $p_1, p_2\in \H_{{h_t}}$ defined as
\[
p_1=\left(t+t,t-t,\frac{t}{2}(-t-t)\right)=(2t, 0, -t^2)
\]
\[
p_2=\left(t-t,t+t,\frac{t}{2}(t+t)\right)=(0,2t, t^2).
\]
Then 
\[
u(p_1)=-2t^3+4t^6+16t^4+t^4,
\]
\[
u(p_2)=-2t^3+4t^6+16t^4+t^4,
\]
and 
\[
\begin{aligned}
S[u]({h_t})
&\le \frac{1}{2}u(p_1)+\frac{1}{2} u(p_2)\\
&=-2t^3+4t^6+17t^4.
\end{aligned}
\]

On the other hand, at $h_t^{-1}=(-t, -t, 0)$, we have 
\[
\begin{aligned}
0\le S[u](h_t^{-1})\le u(h_t^{-1})=4t^4.
\end{aligned}
\]

For $t>0$ sufficiently small, $S[u]({h_t})<0$. Thus,
\[
S[u]({h_t})+S[u](h_t^{-1})<0=2S[u](0).
\]
\end{example}

\section{Supersolution preserving property for elliptic equations}\label{sec:elliptic}


Let us now apply the notion of h-convex envelope to investigate the so-called supersolution preserving property. 
In this paper we focus our attention to the elliptic equation \eqref{elliptic eqn} but similar results can be shown for parabolic problems as well. 

Assume that $u$ is a coercive supersolution of \eqref{elliptic eqn}.  We aim to understand whether the h-convex envelope $\env$ is a supersolution of the same equation, since an affirmative answer, combined with a comparison principle, will imply that the unique solution is h-convex. 

This method is proposed in the Euclidean space by Alvarez, Lasry and Lions \cite{ALL}. One can use this method to show that the unique solution of the linear equation 
\[
u-\Delta u+\la\zeta, \nabla u \ra=f(p) \quad \text{in $\R^N$}
\]
is convex for any $\zeta\in \R^N$ provided that $f$ is convex in $\R^N$. We remark that in general we cannot expect that the same result holds in the Heisenberg group, as indicated by Example \ref{example no symmetry}.

\subsection{The right invariant envelope}\label{sec:right convex}

Our first result concerns the right convexification as given in \eqref{envelope op right}. In addition to (A1)(A2), we need the concavity condition on $F$ below.  
\begin{enumerate}
\item[(A3)] $(p, r, \xi, A)\mapsto F(p, r, \xi, A)$ is concave in the sense that for any $k\in \N$,
\[
\sum_{i=1}^k c_i F(p_i, r_i, \xi_i, A_i)\leq F\left(\sum_{i=1}^k c_ip_i,\ \sum_{i=1}^k c_i r_i,\ \sum_{i=1}^kc_i \xi_i,  \ \sum_{i=1}^k c_i A_i\right)
\]
holds for any $c_i\in [0, 1]$ with $\sum_i c_i =1$, $p_i\in \H$, $r_i\in \R$, $\xi_i\in \R^2$ and $A_i\in \S^2$ (for all $i=1, 2, \ldots, k$) satisfying 
\begin{equation}\label{F concavity}
p_i\in \tilde{\H}_{\sum_{i=1}^k c_i p_i}.
\end{equation}
\end{enumerate}

Let us define an operator of approximate right convexification as follows. For any $\vep>0$ small and any $p=(x, y, z)\in \H$, set
\begin{equation}\label{envelope app right}
\begin{aligned}
\tilde{S}_\vep[u](p):=\inf\bigg\{\sum_{i}c_i u(p_i)+{1\over \vep}\tilde{W}(p_1, p_2, p_3):\   c_i\in [0, 1], \ & p_i\in \H\  (i=1, 2, 3),\\
& \sum_i c_i=1, \ \sum_i c_i p_i=p \bigg\}.
\end{aligned}
\end{equation}
Here we denote
\[
\tilde{W}(p_1, p_2, p_3)=\sum_{i=1, 2, 3}c_i\tilde{g}_i^2\left(p_1, p_2, p_3\right),
\]
where
\[
\tilde{g}_i(p_1, p_2, p_3)=\left(\sum_i c_i y_i\right)x_i-\left(\sum_i c_ix_i\right) y_i-2z_i+2\sum_i c_i z_i.
\]
Note that, since the right invariant horizontal plane $\tilde{H}_p$ at $p\in \H$ is given by \eqref{eq plane right},  the quantity $\tilde{g}_i$ essentially measures how far the point $p_i$ is away from the right invariant horizontal plane passing through $\sum_i c_ip_i$. 

\begin{thm}[Supersolution preserving by right invariant convexification]\label{thm r-preserve}
Assume that (A1), (A2)  and (A3) hold. Let $u\in C(\H)$ be a supersolution of \eqref{elliptic eqn}. Suppose that $u$ satisfies the coercivity condition \eqref{coercive}. Let $\tilde{S}_\vep[u]$ be given by \eqref{envelope app right}. Then $\tilde{S}_\vep[u]$ is also a supersolution of \eqref{elliptic eqn} for any $\vep>0$ small. Moreover, $\tilde{S}[u]$ given by \eqref{envelope op right} is a lower semicontinuous supersolution of \eqref{elliptic eqn} as well. 
\end{thm}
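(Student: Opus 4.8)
The plan is to prove the two assertions of Theorem~\ref{thm r-preserve} in sequence: first the supersolution property of the penalized operator $\tilde S_\vep[u]$ for fixed small $\vep>0$, and then pass to the limit $\vep\to 0$ to handle $\tilde S[u]$.

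\textbf{Step 1: Existence of minimizers for $\tilde S_\vep[u]$.} First I would fix $p\in\H$ and show the infimum in \eqref{envelope app right} is attained. Since $u$ satisfies the coercivity condition \eqref{coercive}, $u(p_i)$ grows superlinearly in $|p_i|$, while the constraint $\sum_i c_i p_i = p$ together with nonnegativity of $\tilde W$ forces any minimizing sequence $\{(c_{ij},p_{ij})\}$ to stay bounded: if some $|p_{ij}|\to\infty$, the corresponding term $c_{ij}u(p_{ij})$ would blow up (compensated only by the bounded-below contributions of the other points, using $\inf_\H u>-\infty$), contradicting that the functional is bounded above by $u(p)$ along the trivial competitor $p_1=p_2=p_3=p$ (note $\tilde g_i(p,p,p)=0$). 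Extracting a convergent subsequence and using continuity of $u$ and of $\tilde W$ yields minimizers $(\bar c_i,\bar p_i)$.

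\textbf{Step 2: The viscosity supersolution inequality for $\tilde S_\vep[u]$.} This is the heart of the matter and is where I adapt the argument of \cite{ALL}. Suppose $\varphi\in C^2(\H)$ and $p_0\in\H$ are such that $\tilde S_\vep[u]-\varphi$ has a strict local minimum at $p_0$; I must show $F(p_0,\tilde S_\vep[u](p_0),\nabla_H\varphi(p_0),(\nabla_H^2\varphi)^\star(p_0))\geq 0$. Let $\bar p_1,\bar p_2,\bar p_3$ and $\bar c_1,\bar c_2,\bar c_3$ be minimizers at $p_0$ from Step~1, with $\sum_i\bar c_i\bar p_i=p_0$. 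The key observation is that for $q$ near $p_0$ one may translate the configuration: using competitors $p_i = \bar p_i + (q-p_0)$ (so $\sum_i\bar c_i p_i = q$) gives, since the penalty $\tilde W$ is invariant under the common Euclidean translation of all three points (each $\tilde g_i$ depends only on the differences $p_i - \sum_j c_j p_j$),
\[
\tilde S_\vep[u](q)\le \sum_i \bar c_i\, u(\bar p_i + (q-p_0)) + \tfrac{1}{\vep}\tilde W(\bar p_1,\bar p_2,\bar p_3).
\]
Therefore the function $q\mapsto \sum_i\bar c_i u(\bar p_i+(q-p_0))-\varphi(q)$ has a local minimum at $q=p_0$; but more useful is to test $u$ itself: for each fixed index $i$, varying only $p_i$ near $\bar p_i$ (keeping the other two fixed and adjusting nothing else since $\sum c_i p_i$ is then free — instead one keeps the constraint by a small compensating shift) one deduces that $u$ minus an appropriate $C^2$ test function has a local minimum at $\bar p_i$. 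Concretely, the penalized functional in the variables $(p_1,p_2,p_3)$ subject to the affine constraint $\sum\bar c_i p_i = q$ is minimized at $(\bar p_1,\bar p_2,\bar p_3,p_0)$; writing the first- and second-order conditions, one gets for each $i$ a test function $\psi_i\in C^2$ with $(\nabla_H\psi_i(\bar p_i),(\nabla_H^2\psi_i)^\star(\bar p_i))\in \overline J_H^{2,-}u(\bar p_i)$ whose horizontal gradient and Hessian relate to $\nabla_H\varphi(p_0)$, $(\nabla_H^2\varphi)^\star(p_0)$. Because of the penalty term, the first-order relation one obtains is $\nabla_H\varphi(p_0)=\sum_i\bar c_i\,\xi_i$ with $\xi_i := \nabla_H\psi_i(\bar p_i)$ (rather than $\xi_i=\nabla_H\varphi(p_0)$ for all $i$ as in the unconstrained Euclidean case), while the second-order inequality gives $(\nabla_H^2\varphi)^\star(p_0)\ge \sum_i\bar c_i\,A_i$ with $A_i:=(\nabla_H^2\psi_i)^\star(\bar p_i)$; moreover $\tilde S_\vep[u](p_0)\ge\sum_i\bar c_i u(\bar p_i)$ and, after discarding the nonnegative penalty, one may also arrange $\tilde S_\vep[u](p_0)=\sum_i \bar c_i u(\bar p_i)$ at the minimizer so that $\tilde S_\vep[u](p_0) = \sum_i \bar c_i\, r_i$ with $r_i := u(\bar p_i)$. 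Crucially, the penalty is set up so that the minimizers satisfy the constraint $\bar p_i\in\tilde\H_{\sum_j\bar c_j\bar p_j}=\tilde\H_{p_0}$ — this is exactly what makes \eqref{F concavity} applicable. Since $u$ is a supersolution, $F(\bar p_i,r_i,\xi_i,A_i)\ge 0$ for each $i$. Applying (A1) (ellipticity, to replace $\sum\bar c_i A_i$ by the larger $(\nabla_H^2\varphi)^\star(p_0)$), (A2) (properness), and then (A3) (concavity along the configuration lying in a common right-invariant horizontal plane) yields
\[
0\le \sum_i \bar c_i F(\bar p_i,r_i,\xi_i,A_i)\le F\Big(\sum_i\bar c_i\bar p_i,\ \sum_i\bar c_i r_i,\ \sum_i\bar c_i\xi_i,\ \sum_i\bar c_i A_i\Big)\le F\big(p_0,\tilde S_\vep[u](p_0),\nabla_H\varphi(p_0),(\nabla_H^2\varphi)^\star(p_0)\big),
\]
which is the desired inequality.

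\textbf{Step 3: Passing to the limit $\vep\to 0$.} It remains to show $\tilde S[u]$ is a lower semicontinuous supersolution. Lower semicontinuity follows from the right-invariant analogue of Lemma~\ref{lem lsc} (valid since $u$ is coercive), as already noted after that lemma. For the supersolution property, I would show $\tilde S_\vep[u]\to\tilde S[u]$ as $\vep\to 0$, locally uniformly (or at least use the half-relaxed-limits machinery): $\tilde S_\vep[u]$ is monotone nonincreasing in $\vep$ and bounded below by $\inf_\H u$, so it converges pointwise to some limit $V\le \tilde S[u]$; conversely any competitor in \eqref{envelope op right} has $\tilde W=0$, hence $\tilde S_\vep[u]\le \tilde S[u]$ so $V\le \tilde S[u]$, and for the reverse one checks that minimizers of $\tilde S_\vep[u]$ have $\tfrac1\vep\tilde W\to 0$ forcing $\tilde W\to 0$, so limiting configurations lie in the right-invariant horizontal plane, giving $V\ge \tilde S[u]$; thus $V=\tilde S[u]$. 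Since a locally uniform limit of supersolutions is a supersolution (the standard stability property of viscosity solutions, valid in the sub-Riemannian setting — see \cite{Bi1}), $\tilde S[u]$ is a supersolution of \eqref{elliptic eqn}.

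\textbf{Main obstacle.} The delicate point is Step~2: correctly extracting, from the constrained penalized minimization, valid elements of the closed subjets $\overline J_H^{2,-}u(\bar p_i)$ together with the precise first-order identity $\nabla_H\varphi(p_0)=\sum_i\bar c_i\xi_i$ and the second-order inequality, and — most importantly — verifying that the minimizing configuration genuinely lies in a common right-invariant horizontal plane so that the concavity hypothesis (A3) with its constraint \eqref{F concavity} can be invoked. Handling the horizontal (as opposed to full Euclidean) derivatives and the noncommutativity correctly, and dealing with the case where minimizers coincide or $\bar c_i=0$, will require care, but the skeleton above is the structure of the proof.
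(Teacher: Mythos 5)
Your skeleton coincides with the paper's strategy: penalize the right-invariant horizontal-plane constraint by $\tilde{W}/\vep$, extract subjets of $u$ at the minimizing points, combine them via (A3), and recover $\tilde{S}[u]$ in the limit $\vep\to 0$. Step 1 and Step 3 are fine (the paper runs the $\vep\to0$ limit through a strict minimum point of $\tilde{S}[u]-\varphi$ and shows $p^\vep\to\hat p$ rather than proving locally uniform convergence of $\tilde S_\vep[u]$, but the two routes are interchangeable). The problem is Step 2, which you correctly identify as the heart but which contains genuine gaps rather than omitted routine detail.

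First, $u$ is only continuous, so you cannot ``write the first- and second-order conditions'' at a minimizer of $(p_1,p_2,p_3)\mapsto\sum_i c_iu(p_i)-\varphi(\sum_ic_ip_i)+\frac1\vep\tilde W$ and read off $C^2$ test functions $\psi_i$; the paper must invoke the Crandall--Ishii lemma to produce Euclidean subjets $(\eta_i,Q_i)\in\overline{J}^{2,-}_Eu(p_i^\vep)$ satisfying the coupled relations \eqref{jet first1}--\eqref{jet second1}, and only then converts to horizontal jets via $\xi_i=M_{p_i^\vep}\eta_i$, $P_i=M_{p_i^\vep}Q_iM_{p_i^\vep}^T$. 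Second, and decisively, the identities you assert --- $\sum_ic_i\xi_i=\nabla_H\varphi(p_0)$ and $\sum_ic_iP_i\geq(\nabla_H^2\varphi)^\star(p_0)-O(\sigma)$ --- are not generic consequences of a Lagrange-multiplier setup: they hold only because the specific penalty $\tilde{W}$ satisfies the cancellations $\sum_iM_{p_i^\vep}\nabla_i\tilde W=0$ and $\nabla^2\tilde W\,\ell[a,b]=0$ for the horizontal test vectors $\ell[a,b]$, so that the $\frac1\vep$-terms drop out of both the first- and second-order relations. The paper's remark following the theorem shows that the analogous computation \emph{fails} for the left-invariant penalty $W$; these cancellations are precisely the nontrivial content of the proof and must be verified by explicit differentiation of $\tilde W$, which your proposal does not attempt. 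Third, your justification for invoking (A3) is incorrect as stated: for fixed $\vep>0$ the minimizers of the penalized functional need not satisfy $\tilde W=0$, i.e. they need not lie on $\tilde{\H}_{p_0}$ (one only gets $\tilde W(p_1^\vep,p_2^\vep,p_3^\vep)\to0$ as $\vep\to0$), so the constraint \eqref{F concavity} is not literally met at the $\vep$-level and the appeal to (A3) there needs a separate justification. A smaller slip: one cannot ``arrange'' $\tilde S_\vep[u](p_0)=\sum_i\bar c_iu(\bar p_i)$; the penalty gives only $\tilde S_\vep[u](p_0)\geq\sum_i\bar c_iu(\bar p_i)$, which is the inequality actually needed together with (A2).
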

\begin{proof}
Fix $\vep>0$ arbitrarily. Let us first show that $\tilde{S}_\vep[u]$ is a supersolution. Suppose that there is $\varphi\in C^2(\H)$ such that $\tilde{S}_\vep[u]-\varphi$ attains a minimum at $p^\vep=(x^\vep, y^\vep, z^\vep)\in \H$. We aim to show that 
\begin{equation}\label{super goal}
F\left(p^\vep, \tilde{S}_\vep[u](p^\vep), \nabla_H\varphi(p^\vep), (\nabla_H^2 \varphi)^\star(p_\vep)\right)\geq 0.
\end{equation}
We may further assume that $\varphi$ is bounded in $\H$. For our use later, we denote 
\[
\eta_\vep=\nabla\varphi(p^\vep), \quad Q_\vep=\nabla^2 \varphi(p^\vep).
\]

By definition of $\tilde{S}_\vep[u]$ and the coercivity of $u$, there exist $c_i\in [0, 1]$ and $p^\vep_i\in \H$ ($i=1, 2, 3$) with 
\begin{equation}\label{super comb1}
\sum_i c_i p^\vep_i=p^\vep
\end{equation}
such that
\begin{equation}\label{super comb2}
\tilde{S}_\vep[u](p^\vep)=\sum_i c_i u(p^\vep_i)+{1\over \vep}\tilde{W}(p^\vep_1, p^\vep_2, p^\vep_3).
\end{equation}
We may assume that $c_i\neq 0$ for every $i=1, 2, 3$, for otherwise we simply reduce to the situation with fewer terms in the sum above and the whole argument below still works. 

It is then clear that 
\begin{equation}\label{perturbed minimality}
\Phi_\vep(p_1, p_2, p_3)=\sum_{i=1, 2, 3} c_iu(p_i)-\varphi\left(\sum_{i}c_i p_i\right)+{1\over \vep}\tilde{W}(p_1, p_2, p_3)
\end{equation}
attains a minimum at $(p^\vep_1, p^\vep_2, p^\vep_3)$.

Denote $p_i^\vep=(x_i^\vep, y_i^\vep, z_i^\vep)$ for $i=1, 2, 3$. In view of the minimality of $\Phi_\vep$ at $(p_1^\vep, p_2^\vep, p_3^\vep)$, we use the Crandall-Ishii lemma \cite{CIL} to obtain, for any $\sigma>0$,  $(\eta_i, Q_i)\in \overline{J}_E^{2, -} u(p_i^\vep)$ satisfying 
\begin{equation}\label{jet first1}
c_i \eta_i=c_i \nabla \varphi(p^\vep)-{1\over \vep}\nabla_i \tilde{W}(p_1^\vep, p_2^\vep, p_3^\vep)
\end{equation}
and 
\begin{equation}\label{jet second1}
\mathbf{Q} \geq  \mathbf{A}-\sigma \mathbf{A}^2, 
\end{equation}
where 
\[
\mathbf{Q}=
\begin{pmatrix}
c_1 Q_1 &  0 & 0\\
0 & c_2 Q_2 & 0\\
0 & 0 & c_3 Q_3
\end{pmatrix}
\]
and
\begin{equation}\label{jet second add1}
\mathbf{A}=\begin{pmatrix}
c_1^2 Q_\vep & c_1c_2Q_\vep & c_1c_3 Q_\vep\\
c_1 c_2 Q_\vep & c_2^2 Q_\vep & c_2 c_3 Q_\vep\\
c_1 c_3 Q_\vep & c_2 c_3 Q_\vep & c_3^2 Q_\vep
\end{pmatrix}
-{1\over \vep} \nabla^2 \tilde{W}(p_1^\vep, p_2^\vep, p_3^\vep).
\end{equation}
For later use, we denote by $\mathbf{A}_1$ the first matrix on the right hand side of \eqref{jet second add1}. 

Let $p_i^\vep=(x_i, y_i, z_i)$ for $i=1, 2, 3$.  By direct calculations,  we have 
\[
\begin{aligned}
&{\partial \tilde{W}\over \partial {x_i}} =2c_i \left(\tilde{g}_i\sum_j c_j y_j-\sum_j c_j \tilde{g}_j y_j\right),\\
&{\partial \tilde{W} \over \partial {y_i}} =2c_i \left(-\tilde{g}_i\sum_j c_j x_j+\sum_j c_j \tilde{g}_j x_j\right),\\
&{\partial \tilde{W} \over \partial{z_i}}=4c_i\left(-\tilde{g}_i+\sum_j c_j\tilde{g}_j\right),
\end{aligned}
\]
and, moreover, 
\[
\begin{aligned}
{\partial^2 \tilde{W} \over \partial x_i \partial x_j}=& 2\delta_{ij} c_j  \left(\sum_j c_j y_j\right)^2-2c_i c_j \left(\sum_j c_j y_j\right)(y_i+y_j)+2c_i c_j \left(\sum_j c_j y_j^2\right),\\
{\partial^2\tilde{W} \over \partial y_i \partial y_j}=& 2\delta_{ij}c_j \left(\sum_j c_j x_j\right)^2-2c_i c_j \left(\sum_j c_j x_j\right)(x_i+x_j)+2c_ic_j \left(\sum_j c_j x_j^2\right),\\
{\partial^2 \tilde{W} \over \partial z_i\partial z_j} =& 8\delta_{ij}c_i-8c_ic_j,\\
\end{aligned}
\]
\[
\begin{aligned}
{\partial^2\tilde{W} \over \partial x_i \partial y_j} =& -2\delta_{ij}c_i \left(\sum_j c_j x_j\right)\left(\sum_j c_j y_j\right)\\
& +2c_i c_j x_i \left(\sum_j c_j y_j\right)+ 2c_ic_j y_j \left(\sum_jc_jx_j\right)-2c_i c_j \left(\sum_j c_jx_j y_j\right), \\
{\partial^2\tilde{W} \over \partial  x_i \partial z_j} =& -4\delta_{ij}c_i \left(\sum_j c_j y_j \right)+4c_ic_j y_j,\\
{\partial^2\tilde{W} \over \partial  y_i \partial z_j}=& 4\delta_{ij} c_i \left(\sum_j c_j x_j \right)-4c_ic_j x_j
\end{aligned}
\]
for all $i, j=1, 2, 3$. Then at the point $(p_1^\vep, p_2^\vep, p_3^\vep)$, we get
\[
\nabla_i \tilde{W}=2c_i\left(\tilde{g}_iy_\vep-\sum_j c_j \tilde{g}_jy_j, -\tilde{g}_i x_\vep+\sum_j c_j \tilde{g}_j x_j, -2\tilde{g}_i \right)
\]
and 
\[
\begin{aligned}
{\partial^2 \tilde{W} \over \partial x_i \partial x_j}=& 2\delta_{ij} c_j  y_\vep^2-2c_i c_j y_\vep(y_i+y_j)+2c_i c_j \left(\sum_j c_j y_j^2\right)\\
{\partial^2\tilde{W} \over \partial y_i \partial y_j}=& 2\delta_{ij}c_j x_\vep^2-2c_i c_j x_\vep(x_i+x_j)+2c_ic_j \left(\sum_j c_j x_j^2\right)\\
{\partial^2 \tilde{W} \over \partial z_i\partial z_j}=& 8\delta_{ij}c_i-8c_ic_j,\\
\end{aligned}
\]
\[
\begin{aligned}
{\partial^2\tilde{W} \over \partial x_i \partial y_j} =& -2\delta_{ij}c_i x_\vep y_\vep +2c_i c_j x_i y_\vep+ 2c_ic_j y_j x_\vep-2c_i c_j \left(\sum_j c_jx_j y_j\right), \\
{\partial^2\tilde{W} \over \partial  x_i \partial z_j} =& -4\delta_{ij}c_i y_\vep+4c_ic_j y_j,\\
{\partial^2\tilde{W} \over \partial  y_i \partial z_j}=&\  4\delta_{ij} c_i x_\vep-4c_ic_j x_j,
\end{aligned}
\]
where $\delta_{ij}$ is the Kronecker delta. 
Note that for any $p=(x, y, z)\in \H$, if $(\eta, Q)\in \overline{J}_E^{2, -}u(p)$, then $(\xi, P)\in \overline{J}_H^{2, -}u(p)$, where
\[
\xi =M_p \eta, \quad P=M_p Q M_p^T. 
\]
Here $M_p$ is a $2\times 3$ matrix given by
\[
M_p=\begin{pmatrix} 
1 & 0 & -y/2 \\
0 & 1 & x/2 
\end{pmatrix}
\]
and $M_p^T$ denotes its transpose. We use these relations to find $(\xi_i, P_i)\in \overline{J}_H^{2, -}u(p_i^\vep)$, that is, 
\begin{equation}\label{eh}
\xi_i=M_{p_i^\vep} \eta_i, \quad P_i=M_{p_i^\vep} Q_i M_{p_i^\vep}^T
\end{equation}
for all $i=1, 2, 3$. Then by \eqref{jet first1} and \eqref{eh}, we obtain
\begin{equation}\label{jet first2}
c_i\xi_i=c_i M_{p_i^\vep}  \nabla \varphi(p^\vep)-{1\over \vep} M_{p_i^\vep}\nabla_{i} \tilde{W}(p_1^\vep, p_2^\vep, p_3^\vep),
\end{equation}
which, by direct calculations, yields that
\begin{equation}\label{jet first}
\sum_i c_i\xi_i=\nabla_H \varphi(p^\vep),
\end{equation}
since 
\[
\begin{aligned}
&M_{p_i^\vep}\nabla_{i} \tilde{W}(p_1^\vep, p_2^\vep, p_3^\vep)\\
&=2c_i\left( \tilde{g}_i \sum_i c_i y_i-\sum_i c_i \tilde{g}_i y_i+\tilde{g}_i y_i, \  -\tilde{g}_i \sum_i c_i x_i+\sum_i c_i \tilde{g}_i x_i -\tilde{g}_ix_i\right)^T.
\end{aligned}
\]
More calculations are needed for the second derivatives. For any $(a, b)\in \R^2$, set
\[
\ell[a, b]=\left(a, b, -{y_1\over 2}a+{x_1\over 2}b, a, b, -{y_2\over 2}a+{x_2\over 2}b, a, b, -{y_3\over 2}a+{x_3\over 2}b\right)^T\in \R^9.
\]
We multiply both sides of \eqref{jet second1} by $\ell[a, b]$ from the left and by its transpose $\ell[a, b]^T$ from the right. We first have 
\begin{equation}\label{jet second2}
\la \mathbf{Q} \ell[a, b], \ell[a, b]\ra
=\la \mathbf{P} k[a, b], k[a, b]\ra \\
= \la \left(\sum_i  c_i P_i\right)(a, b)^T, (a, b)^T\ra,
\end{equation}
where $k[a, b]=(a, b, a, b, a, b)^T\in \R^6$ and 
\[
\mathbf{P}=\begin{pmatrix}
c_1 P_1 &  0 & 0\\
0 & c_2 P_2 & 0\\
0 & 0 & c_3 P_3
\end{pmatrix}.
\]

 For the right hand side, straightforward calculations yield
\begin{equation}\label{jet second3}
\begin{aligned}
\la \mathbf{A}_1 \ell[a, b], \ell[a, b]\ra
& =(a, b)\left(\sum_i c_i M_{p_i^\vep}\right) Q_\vep \left(\sum_i c_i M_{p_i^\vep}^T \right) \begin{pmatrix}a\\ b\end{pmatrix}\\
&= \la \left(\nabla^2_H \varphi\right)^\star (p^\vep) (a, b)^T, (a, b)^T\ra.
\end{aligned}
\end{equation}
In addition, we can calculate to see that
\begin{equation}\label{key second}
\nabla^2 \tilde{W}(p_1^\vep, p_2^\vep, p_3^\vep)\ell[a, b]=0, 
\end{equation}
which yields 
\begin{equation}\label{jet second4}
\la\left( \nabla^2 \tilde{W}(p_1^\vep, p_2^\vep, p_3^\vep)\right) \ell[a, b], \ell[a, b]\ra=0
\end{equation}
and 
\begin{equation}\label{jet second5}
\la\left( \nabla^2 \tilde{W}(p_1^\vep, p_2^\vep, p_3^\vep)\right)^2 \ell[a, b], \ell[a, b]\ra=0.
\end{equation}
Combining \eqref{jet second2}--\eqref{jet second5} with \eqref{jet second1}, we are led to 
\begin{equation}\label{jet second}
\sum_i c_i P_i\geq (\nabla_H^2 \varphi)^\star(p^\vep)-O(\sigma)
\end{equation}
when $\sigma>0$ is taken small. 

Since $u$ is a supersolution of \eqref{elliptic eqn}, we get
\[
F\left(p_i^\vep, u(p_i^\vep), \xi_i, P_i\right)\geq 0\quad i=1, 2, 3.
\]
Multiplying the inequality above by $c_i$ and summing them up, we deduce that 
\[
\sum_i c_i F\left(p_i^\vep, u(p_i^\vep), \xi_i, P_i\right)\geq 0. 
\]
By the concavity condition (A3), we obtain 
\[
F\left(\sum_i c_i p_i^\vep,\ \sum_i u(p_i^\vep),\ \sum_i c_i \xi_i,\ \sum_i c_iP_i\right)\geq 0.
\]
Note that \eqref{super comb2} implies that 
\begin{equation}\label{super comb3}
\tilde{S}_\vep[u](p^\vep)\geq \sum_i c_i u(p_i^\vep).
\end{equation}
Adopting \eqref{super comb1}, \eqref{jet first},  \eqref{jet second} and \eqref{super comb3} as well as (A1) and (A2), we end up with 
\[
F\left(p^\vep, \tilde{S}_\vep[u](p^\vep), \nabla_H\varphi(p^\vep), (\nabla_H^2 \varphi)^\star(p_\vep)-O(\sigma)\right)\geq 0,
\]
which yields \eqref{super goal} as desired by letting $\sigma\to 0$. 

We finally prove that $\tilde{S}[u]$ is a supersolution. The proof is essentially based on the standard stability theory.  Suppose that there exist $\hat{p}\in \H$ and $\varphi\in C^2(\H)$ such that $\tilde{S}[u]-\varphi$ attains a strict minimum in $\H$ at $\hat{p}$. Since $u$ is coercive,  there exist $\hat{p}_i\in \H_{\hat{p}}$ and $c_i\in [0, 1]$ ($i=1, 2, 3$) satisfying 
\[
\sum_i c_i=1, \quad \sum_i c_i \hat{p}_i=\hat{p}
\] 
such that 
\[
(p_1, p_2, p_3)\mapsto \sum_i c_iu(p_i)-\varphi\left(\sum_i c_i p_i\right)
\]
attains a minimum at $(\hat{p}_1, \hat{p}_2, \hat{p}_3)$.  It follows that for any $\vep>0$ there exist $p_i^\vep\in \H$ such that \eqref{perturbed minimality} attains a minimum at $(p_1^\vep, p_2^\vep, p_3^\vep)$. Let $p^\vep=\sum_i c_i p_i^\vep$. Then by definition \eqref{perturbed minimality} amounts to saying that $\tilde{S}_\vep[u]-\varphi$ attains a minimum in $\H$ at $p^\vep$. 

Moreover, we claim that $p^\vep\to \hat{p}$ as $\vep\to 0$. Indeed, using the coercivity of $u$, we may take a subsequence so that as $\vep\to 0$, $p_i^\vep\to q_i$ for some $q_i\in \H$  ($i=1, 2, 3$) and $p^\vep\to q_0=\sum_i c_i q_i$. Since  
\begin{equation}\label{perturb comparison}
\Phi_\vep(p_1^\vep, p_2^\vep, p_3^\vep)\leq \Phi_\vep(\hat{p}_1, \hat{p}_2, \hat{p}_3), 
\end{equation}
we deduce that 
\[
{1\over \vep}\tilde{W}(p_1^\vep, p_2^\vep, p_3^\vep)\leq \sum_i c_iu(\hat{p}_i) -\sum_i c_i u(p_i^\vep)-\varphi(\hat{p})+\varphi(p^\vep).
\]
Noticing that the right hand side is bounded above, we get, as $\vep\to 0$, 
\[
\tilde{W}(p_1^\vep, p_2^\vep, p_3^\vep)\to 0, 
\]
which yields that $q_i\in \H_{q_0}$. Hence, letting $\vep\to 0$ in \eqref{perturb comparison}, we obtain 
\[
\tilde{S}[u](q_0)-\varphi(q_0)\leq \tilde{S}[u](\hat{p})-\varphi(\hat{p}),
\]
which implies that $q_0=\hat{p}$ due to the strict minimum of $\tilde{S}[u]-\varphi$ at $\hat{p}$. We complete the proof of the claim. 

Since we have shown that $\tilde{S}_\vep[u]$ is a supersolution of \eqref{elliptic eqn}, the inequality \eqref{super goal} holds. Noticing that $\tilde{S}[u]\geq \tilde{S}_\vep[u]$, by (A2) we get
\[
F\left(p^\vep, \tilde{S}[u](p^\vep), \nabla_H\varphi(p^\vep), (\nabla_H^2 \varphi)^\star(p_\vep)\right)\geq 0.
\]
Passing to the limit as $\vep\to 0$, we apply the continuity of $F$ and the fact that $p_\vep\to \hat{p}$ to obtain
\[
F\left(\hat{p},\ \tilde{S}[u](\hat{p}),\ \nabla_H\varphi(\hat{p}),\ (\nabla_H^2 \varphi)^\star(\hat{p}) \right)\geq 0, 
\]
as desired. 
\end{proof}

\begin{rmk}
The key to the proof of Theorem \ref{thm r-preserve} lies at the relations \eqref{jet first} and \eqref{key second}. One may wonder whether one can use the same method to show $S[u]$ is a supersolution by 
replacing $\tilde{W}$ with $W$ given by
\[
W(p_1, p_2, p_3)=\sum_i c_i g_i^2(p_1, p_2, p_3), \quad \text{$p_i\in \H$, i=1, 2, 3}, 
\]
where 
\[
g_i(p_1, p_2, p_3)=\left(\sum_i c_i y_i\right) x_i -\left(\sum_i c_i x_i \right) y_i +2z_i -2\sum_i c_i z_i
\]
for $p_i=(x_i, y_i, z_i)$, $i=1, 2, 3$. We however are not able to obtain the desired conclusion in this case. Note that the horizontal gradient with respect to the variable $p_i$ is calculated to be 
\[
\begin{aligned}
&\nabla_{H, i} W\\
&=\left(2c_i g_i \sum_i c_iy_i- 2c_ig_iy_i -2c_i \sum_i c_i g_i y_i, -2c_i g_i \sum_i c_i x_i +2c_ig_i x_i +2c_i \sum_i c_i  g_ix_i \right).
\end{aligned}
\]
We therefore have
\[
\sum_i \nabla_{H, i}W=4\left(- \sum_i c_i g_i y_i,\  \sum_i c_ig_ix_i\right),
\]
which fails to vanish in general. In other words, one cannot obtain \eqref{jet first} in this case. Similarly, \eqref{jet second} cannot be expected either. 
\end{rmk}

\begin{rmk}
It is implied by (A3) that $p\mapsto -F(p, r, \xi, A)$ is right invariant h-convex. This condition is necessary for the result in Theorem \ref{thm r-preserve}. In fact, in the trivial case when 
\[
F(p, r, \xi, A)=r-f(p)  
\]
for $(p, r, \xi, A)\in \H\times \R\times \R^2\times \S^2$, $u=f$ is clearly the unique solution to this entirely degenerate equation. The assumption (A3) reduces to the right invariant h-convexity of $f$, which certainly implies that $\tilde{S}[u]=u=f$ in $\H$. 
\end{rmk}

\subsection{The left invariant envelope with symmetry}

Under the symmetry of $u$ with respect to $z$-axis, Theorem \ref{thm r-preserve} enables us to show that $\env$ is a supersolution of \eqref{elliptic eqn} if $u$ itself is a supersolution. 

\begin{thm}[Symmetric supersolution preserving]\label{thm preserve rot}
Assume that (A1)--(A3) hold. Let $u$ be a lower semicontinuous supersolution of \eqref{elliptic eqn}. Suppose that $u$ satisfies the coercivity condition \eqref{coercive}. Assume in addition that $u$ is symmetric with respect to $z$-axis.  Let $S[u]$ be given by \eqref{envelope op}. Then $S[u]$ is also a supersolution of \eqref{elliptic eqn}. Moreover, the convex envelope $\env$ is a supersolution of \eqref{elliptic eqn} as well. 
\end{thm}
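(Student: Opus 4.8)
The plan is to bootstrap from the right-invariant result, Theorem~\ref{thm r-preserve}, exploiting the fact that for functions symmetric with respect to the $z$-axis the left and right convexifications coincide. The argument splits naturally into a single-step claim and an iteration.

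\emph{Single step.} Suppose $v$ is a lower semicontinuous coercive supersolution of \eqref{elliptic eqn} that is symmetric with respect to the $z$-axis, and (A1)--(A3) hold. By Theorem~\ref{thm axisym2}, $S[v]=\tilde S[v]$ in $\H$. By Theorem~\ref{thm r-preserve}, since $v$ is a coercive supersolution and (A1)--(A3) hold, $\tilde S[v]$ is a lower semicontinuous (and coercive) supersolution of \eqref{elliptic eqn}. Combining the two, $S[v]=\tilde S[v]$ is a supersolution. Applying this with $v=u$ gives the first assertion of the theorem.

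\emph{Iteration and limit.} The three standing hypotheses propagate under $S$: lower semicontinuity by Lemma~\ref{lem lsc} (using coercivity), the coercivity \eqref{coercive} by Lemma~\ref{lem coercive}, and the symmetry with respect to the $z$-axis by Lemma~\ref{lem axisym1}. Hence the single-step claim applies to $S[u]$ in place of $u$, then to $S^2[u]$, and so on; by induction $S^n[u]$ is a lower semicontinuous coercive $z$-symmetric supersolution of \eqref{elliptic eqn} for every $n\in\N$. To conclude I would pass to the limit. By Theorem~\ref{thm equivalence} (see also Remark~\ref{rmk uniforml convergence2}), $S^n[u]$ decreases pointwise to $\env$, which is h-convex by Lemma~\ref{lem h-convexify}, hence locally Lipschitz and in particular continuous; moreover $S^n[u]\ge \inf_\H u$ for all $n$. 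The lower relaxed limit $\liminf_{\ast}S^n[u]$ is then a supersolution of \eqref{elliptic eqn} by the standard stability of viscosity supersolutions under half-relaxed limits, and since $\{S^n[u]\}$ is monotone decreasing with continuous pointwise limit one checks directly that $\liminf_{\ast}S^n[u]=\env$ in $\H$; thus $\env$ is a supersolution. (If one assumes $u\in C(\H)$ from the start, then $S^n[u]\in C(\H)$ by Lemmas~\ref{lem usc} and~\ref{lem lsc}, so Dini's theorem, Remark~\ref{rmk uniform convergence}, yields locally uniform convergence and the usual stability theorem applies directly.)

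\emph{Main obstacle.} Since we do not know the iteration terminates in finitely many steps, a limiting argument is unavoidable, and because $u$ is only lower semicontinuous the convergence $S^n[u]\to\env$ need not be uniform; the delicate point is precisely that the half-relaxed limit method, combined with the a posteriori continuity of $\env$ coming from its h-convexity, still delivers the supersolution property in the limit. The rest is bookkeeping, the analytic core---the Crandall--Ishii lemma computation with the penalty $\tilde W$ that produces the identities \eqref{jet first} and \eqref{key second}---having already been carried out in Theorem~\ref{thm r-preserve}.
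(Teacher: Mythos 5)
Your proof is correct and follows essentially the same route as the paper: a single convexification step via Theorem~\ref{thm axisym2} combined with Theorem~\ref{thm r-preserve}, iteration using Lemmas~\ref{lem lsc}, \ref{lem coercive} and \ref{lem axisym1}, and passage to the limit by viscosity stability. Your extra care with half-relaxed limits for merely lower semicontinuous $u$ is a welcome refinement, since the paper's appeal to Remark~\ref{rmk uniform convergence} (Dini) strictly requires $u\in C(\H)$.
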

\begin{proof}
By Theorem \ref{thm r-preserve}, we see that $\St[u]$ is a supersolution of \eqref{elliptic eqn}. Due to the symmetry of $u$, by Theorem \ref{thm axisym2} we have $\St[u]=S[u]$ in $\H$, which implies that $S[u]$ is also a supersolution. Moreover, $S[u]$ is also symmetric thanks to Lemma \ref{lem axisym1}. In addition, by Lemma \ref{lem coercive}, $S[u]$ also satisfies the coercivity condition \eqref{coercive}. 

We thus can iterate the argument to show that $S^n[u]$ is a supersolution of \eqref{elliptic eqn} for any $n=1, 2, \ldots$ Since $S^n[u]\to \env$ in $\H$ locally uniformly as $n\to \infty$ (see Remark \ref{rmk uniform convergence}), we conclude that $\env$ is a supersolution by the standard stability theory of viscosity solutions. 
\end{proof}

The convexity preserving property of the h-convex envelope enables us to show the h-convexity of a symmetric solution of \eqref{elliptic eqn} provided that a comparison principle for \eqref{elliptic eqn} is available. 

\begin{cor}[H-convexity of symmetric solutions]\label{cor axisym1}
Assume that (A1)--(A3) hold. Let $u$ be a continuous solution of \eqref{elliptic eqn} satisfying \eqref{coercive}. Assume in addition that $u$ is symmetric with respect to $z$-axis. If the comparison principle for \eqref{elliptic eqn} holds, then $u$ is h-convex in $\H$.  
\end{cor}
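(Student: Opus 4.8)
The plan is to obtain the h-convexity of $u$ directly from the symmetric supersolution preserving property in Theorem~\ref{thm preserve rot}, combined with the assumed comparison principle, through a Perron-type sandwich argument. In short: $\env$ lies below $u$ by construction and above $u$ by comparison, so the two coincide, and $\env$ is h-convex.

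First I would verify that the hypotheses of Theorem~\ref{thm preserve rot} are met. Since $u$ is a continuous solution of \eqref{elliptic eqn} that is coercive in the sense of \eqref{coercive} and symmetric with respect to the $z$-axis, it is in particular a lower semicontinuous (indeed continuous) supersolution of \eqref{elliptic eqn} satisfying exactly the assumptions of that theorem. Hence Theorem~\ref{thm preserve rot} yields that the h-convex envelope $\env$ — equivalently, by Theorem~\ref{thm equivalence}, the pointwise limit of $S^n[u]$ — is itself a supersolution of \eqref{elliptic eqn}. Moreover $\env$ is h-convex by construction (it is the greatest h-convex function majorized by $u$), hence locally Lipschitz, hence continuous, by Theorem~\ref{thm lip}; and $\env\le u$ in $\H$ by the definition \eqref{envelope1}.

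Next I would invoke the comparison principle for \eqref{elliptic eqn}, which is assumed to hold. Since $u$ is also a subsolution of \eqref{elliptic eqn} and $\env$ is a (continuous) supersolution, comparison gives $u\le \env$ in $\H$. Together with $\env\le u$ this forces $u=\env$ in $\H$, and therefore $u$ is h-convex, which is the assertion of the corollary.

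The only points needing a little care — and none is a genuine obstacle — are checking that the coercivity and $z$-axis symmetry of the given solution $u$ are precisely the structural assumptions consumed by Theorem~\ref{thm preserve rot}, and making sure that the comparison principle is applied to an admissible pair, which is fine here because $\env$ is continuous thanks to the Lipschitz regularity of h-convex functions. In effect the substantive work has already been carried out in Theorem~\ref{thm r-preserve} and Theorem~\ref{thm preserve rot}; the corollary is a short deduction from them and the comparison principle.
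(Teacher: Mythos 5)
Your argument is correct and coincides with the paper's own proof: Theorem \ref{thm preserve rot} makes $\env$ a supersolution, comparison gives $u\leq\env$, and $\env\leq u$ by definition, so $u=\env$ is h-convex. The additional checks on continuity and hypotheses are fine but not essential to the deduction.
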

\begin{proof}
Since $\env$ is a supersolution by Theorem \ref{thm preserve rot},  we apply the comparison principle to deduce that $u\leq \env$. Noticing that $\env\leq u$ by definition, we conclude that $u=\env$ in $\H$, which yields the h-convexity of $u$. 
\end{proof}
In the proof above, it in fact suffices to show that $u=S[u]$ hold in $\H$ by using the same argument.

It is natural to ask when the solution $u$ is symmetric with respect to $z$-axis. It turns out to be sufficient to have the following two ingredients at hand.

\begin{itemize}
\item A comparison principle that allows coercive solutions as mentioned in Remark \ref{rmk comparison} is needed.
\item The following symmetric assumption (A4) on $F$ is additionally imposed. 
\end{itemize}
\begin{enumerate}
\item[(A4)] $(p, \xi)\mapsto F\left(p, r, \xi, A\right)$ is symmetric in the sense that 
\[
F(p, r, \xi, A)=F(p', r, -\xi, A)
\]
for any $p=(x, y, z)\in \H$ with $p'=(-x, -y, z)\in \H$, $r\in \R$, $\xi\in \R^2$ and $A\in \S^2$. 
\end{enumerate}

\begin{prop}[Symmetry of solutions]\label{prop axisym}
Assume that (A4) holds. If $u$ is a subsolution (resp., supersolution, solutions) of \eqref{elliptic eqn}, then $v(x, y, z)=u(-x, -y, z)$ is also a subsolution (resp., supersolution, solutions)  of \eqref{elliptic eqn}. In particular, if $u$ is the unique solution of \eqref{elliptic eqn}, then $u$ is symmetric with respect to $z$-axis. 
\end{prop}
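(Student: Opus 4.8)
The plan is to observe that the coordinate reflection $\iota(x,y,z) := (-x,-y,z)$ is a group automorphism of $\H$ --- indeed $\iota(p\cdot q)=\iota(p)\cdot\iota(q)$ is immediate from the group law --- which is at the same time an involution and a Euclidean isometry, and then to transport the defining viscosity inequalities of \eqref{elliptic eqn} through $\iota$, using (A4) to absorb the sign changes it produces.

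First I would record how $\iota$ acts on the horizontal structure. Since $d\iota=\mathrm{diag}(-1,-1,1)$, a short computation in coordinates gives, for every $w\in C^2(\H)$,
\[
X(w\circ\iota)=-(Xw)\circ\iota,\qquad Y(w\circ\iota)=-(Yw)\circ\iota,
\]
equivalently $\iota_*X=-X$ and $\iota_*Y=-Y$ (consistently $\iota_*Z=Z$, as $[-X,-Y]=[X,Y]$). Iterating these two identities cancels the signs at second order, so that
\[
\nabla_H(w\circ\iota)(p)=-\nabla_H w(\iota(p)),\qquad (\nabla_H^2(w\circ\iota))^\star(p)=(\nabla_H^2 w)^\star(\iota(p))
\]
for all $p\in\H$. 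In particular, writing $v:=u\circ\iota$, these rules apply both to $v$ itself and to any $C^2$ test function composed with $\iota$.

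Next I would run the standard change-of-variables argument for viscosity solutions. Assume $u$ is a supersolution and that $v-\varphi$ attains a (strict) minimum at $p_0=(x_0,y_0,z_0)$ for some $\varphi\in C^2(\H)$. Because $\iota$ is a smooth involution, $u-\varphi\circ\iota$ attains a minimum at $q_0:=\iota(p_0)=p_0'$, and $\psi:=\varphi\circ\iota\in C^2(\H)$. The supersolution inequality for $u$ at $q_0$ with test function $\psi$, combined with $u(q_0)=v(p_0)$ and the transformation rules above, reads
\[
F\big(p_0',\, v(p_0),\, -\nabla_H\varphi(p_0),\, (\nabla_H^2\varphi)^\star(p_0)\big)\ge 0 .
\]
Applying (A4) with $p=p_0$ rewrites the left-hand side as $F\big(p_0, v(p_0), \nabla_H\varphi(p_0), (\nabla_H^2\varphi)^\star(p_0)\big)$, which is exactly the supersolution inequality for $v$ at $p_0$. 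The subsolution case is identical after reversing the inequality and replacing minimum by maximum, and the solution case follows by combining the two. Finally, since $\iota$ is a Euclidean isometry it preserves the Kor\'anyi gauge (and the Euclidean norm), hence the polynomial-growth/coercivity class in which uniqueness is assumed in Remark \ref{rmk comparison}; thus if $u$ is the unique solution of \eqref{elliptic eqn} then $v=u\circ\iota$ is also a solution, so $v\equiv u$, which is precisely the symmetry $u(x,y,z)=u(-x,-y,z)$ of \eqref{zsym}.

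I expect the only genuine content to be the first step: getting the signs right in $\iota_*X=-X$, $\iota_*Y=-Y$ and propagating them correctly to the horizontal gradient and the symmetrized horizontal Hessian; after that the argument is the routine test-function machinery plus a single invocation of (A4). Minor points to watch are that the extremum may be taken strict without loss of generality, and that one may equivalently phrase everything through the subelliptic semijets $\overline{J}^{2,\pm}_H$ rather than test functions.
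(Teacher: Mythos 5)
Your proposal is correct and follows essentially the same route as the paper: the paper likewise composes the test function with the reflection $(x,y,z)\mapsto(-x,-y,z)$, computes directly that $X\psi=-(X\phi)\circ\iota$, $Y\psi=-(Y\phi)\circ\iota$ while the symmetrized horizontal Hessian is invariant, and then invokes (A4); your phrasing via $\iota_*X=-X$, $\iota_*Y=-Y$ is just a more conceptual packaging of the same computation. The paper writes out the subsolution case and notes the rest is analogous, exactly as you do in reverse.
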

\begin{proof}
Let us only verify the subsolution part. Suppose that there exist $p_0=(x_0, y_0, z_0)\in \H$ and $\phi\in C^2(\H)$ such that $v-\phi$ attains a maximum at $p_0$. We aim to show that 
\begin{equation}\label{axisym sub1}
F(p_0, v(p_0), \nabla_H \phi(p_0), (\nabla_H^2 \phi)^\star(p_0))\leq 0. 
\end{equation}
It is clear that $u-\psi$ attains a maximum at $p_0'=(-x_0, -y_0, z_0)$, where $\psi(x, y, z)=\phi(-x, -y, z)$.
Since $u$ is a subsolution of \eqref{elliptic eqn}, we have
\begin{equation}\label{axisym sub2}
F(p_0', u(p_0'), \nabla_H \psi(p_0'), (\nabla_H^2\psi)^\star(p_0'))\leq 0.
\end{equation}
In order to use \eqref{axisym sub2} to show \eqref{axisym sub1}, we make the following straightforward calculations: for any $(x, y, z)\in\H$,
\[
(X \psi)(x, y, z)=\left(-{\partial\phi \over \partial x}  -{y\over 2}{\partial\phi\over \partial z} \right)(-x, -y, z)=-(X\phi)(-x, -y, z);
\]
\[
(Y \psi)(x, y, z)=\left(-{\partial\phi \over \partial y} +{x\over 2}{\partial\phi \over  \partial z} \right)(-x, -y, z)=-(Y\phi)(-x, -y, z);
\]
\[
(X^2\psi)(x, y, z)=\left({\partial^2 \phi\over \partial x^2}+y{\partial^2\phi \over \partial x\partial z}  +{y^2\over 4}{\partial^2 \phi\over \partial z^2 }\right)(-x, -y, z)=(X^2 \phi)(-x, -y, z);
\]
\[
(Y^2\psi)(x, y, z)=\left({\partial^2 \phi \over \partial y^2} -x{\partial^2\phi \over \partial y\partial z} +{x^2\over 4}{\partial^2 \phi\over \partial z^2 }\right)(-x, -y, z)=(Y^2 \phi)(-x, -y, z);
\]
\[
\begin{aligned}
{1\over 2}\left(X Y\psi+YX \psi\right)(x, y, z)&=\left({\partial^2\phi \over \partial x\partial y}-{x\over 2} {\partial^2\phi \over \partial x\partial z}+{y\over 2} {\partial^2\phi \over \partial y\partial z}-{xy\over 4} {\partial^2 \phi\over \partial z^2 }\right)(-x, -y, z)\\
&={1\over 2}\left(X Y\phi+YX \phi\right)(-x, -y, z).
\end{aligned}
\]
It then follows that 
\[
\nabla_H\psi(p_0')=-\nabla_H \phi(p_0), \quad (\nabla_H^2\psi)^\star(p_0')=(\nabla_H^2\phi)^\star(p_0). 
\]
Plugging these into \eqref{axisym sub2}, we are led to 
\[
F(p_0', v(p_0), -\nabla_H \phi(p_0), (\nabla^2_H \phi)^\star(p_0))\leq 0. 
\]
By the symmetry condition (A4), we get \eqref{axisym sub1} immediately. 
\end{proof}

Theorem \ref{thm preserve rot} and Proposition \ref{prop axisym} thus imply the following. 

\begin{thm}[H-convexity of solutions]\label{cor h-convex}
Assume that (A1)--(A4) hold. Assume that the comparison principle for \eqref{elliptic eqn} holds. Let $u$ be the unique continuous solution of \eqref{elliptic eqn} satisfying \eqref{coercive}. Let $S[u]$ be given by \eqref{envelope op}.  Then $S[u]=u$ in $\H$. 
In particular, $u$ is h-convex in $\H$.  
\end{thm}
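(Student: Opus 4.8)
The plan is to obtain the statement by stacking together results already in hand, with the only genuinely new input being a uniqueness-plus-symmetry argument. First I would use Proposition \ref{prop axisym}: since we are assuming the comparison principle, the coercive continuous solution $u$ is unique, and by (A4) the reflected function $v(x,y,z)=u(-x,-y,z)$ is again a solution of \eqref{elliptic eqn}. The reflection $(x,y,z)\mapsto(-x,-y,z)$ is an isometry for the Euclidean metric and preserves $|p|$, so $v$ is continuous and still satisfies the coercivity \eqref{coercive} (and the same polynomial growth bound as $u$). Uniqueness then forces $v=u$, i.e. $u$ is symmetric with respect to the $z$-axis in the sense of \eqref{zsym}.

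Next I would invoke Theorem \ref{thm preserve rot}. Since $u$ is a continuous (hence lower semicontinuous) supersolution of \eqref{elliptic eqn} satisfying \eqref{coercive} and symmetric about the $z$-axis, and (A1)--(A3) hold, that theorem applies and gives that $S[u]$ is a supersolution of \eqref{elliptic eqn}. Internally this is where all the real work sits: Theorem \ref{thm r-preserve} handles the right-invariant convexification via the penalized minimization (producing the crucial identities \eqref{jet first} and \eqref{key second}), and Theorem \ref{thm axisym2} identifies $S[u]=\tilde S[u]$ under the symmetry so that the right-invariant conclusion transfers to the left-invariant operator.

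Finally I would close the loop with the comparison principle. By the very definition \eqref{envelope op} we have $S[u]\le u$ in $\H$, and $S[u]$ is bounded below with polynomial growth from above (being squeezed between $\inf u$ and $u$), so it is an admissible supersolution for the comparison result of Remark \ref{rmk comparison}; its coercivity, if needed, follows from Lemma \ref{lem coercive}. Since $u$ is also a subsolution, comparison gives $u\le S[u]$, whence $S[u]=u$ in $\H$. As recalled in Section \ref{sec:convexify1}, the identity $S[u]=u$ is equivalent to the h-convexity of $u$, so $u$ is h-convex, as claimed. The main obstacle in this particular proof is essentially bookkeeping rather than analysis: one must be careful that every hypothesis of Proposition \ref{prop axisym}, Theorem \ref{thm preserve rot}, and the comparison principle (especially the growth condition admitting coercive sub- and supersolutions) is genuinely verified for the objects $u$, $v=u(-\cdot,-\cdot,\cdot)$, and $S[u]$ at hand; the hard estimates themselves are already contained in Theorem \ref{thm r-preserve}.
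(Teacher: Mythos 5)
Your proposal is correct and follows essentially the same route as the paper: Proposition \ref{prop axisym} plus uniqueness gives the $z$-axis symmetry of $u$, Theorem \ref{thm preserve rot} then shows $S[u]$ is a supersolution, and the comparison principle combined with $S[u]\le u$ yields $S[u]=u$, hence h-convexity (this is exactly the argument of Corollary \ref{cor axisym1} and the remark following it). Your extra care in verifying lower semicontinuity and coercivity of $S[u]$ via Lemmas \ref{lem lsc} and \ref{lem coercive} is consistent with, if slightly more explicit than, the paper's presentation.
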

\begin{rmk}\label{rmk concave sym}
As pointed out in Proposition \ref{prop axisym3}, under (A4),  one can replace \eqref{F concavity} in the concavity assumption (A3) by 
\[
p_i\in \H_{\sum_{i=1}^k c_i p_i}.
\]
In other words, we can assume that $p\mapsto F(p, r, \xi, X)$ satifies h-concavity instead of the right invariant h-concavity. 
\end{rmk}

\subsection{Examples in a special case}\label{sec:special eqn}

Since a comparison theorem is available in \cite{I5} when $F$ is of the form \eqref{special form}, our convexity result in particular applies to the semilinear equation \eqref{special eqn}, as shown in Theorem \ref{intro thm}. 
Note that the symmetry condition of $f$ enables us to assume h-convexity of $f$ rather than its right invariant h-convexity, since they are equivalent, as mentioned in Proposition \ref{prop axisym3} and Remark \ref{rmk concave sym}. 

We here skip discussion on existence of solutions to \eqref{special eqn} and the more general equation \eqref{elliptic eqn}, since it can be obtained by Perron's method. But we provide a concrete example below in order to avoid possible triviality of our results. 

\begin{example}\label{example h-convex sol1}
Consider \eqref{special eqn} with $0\leq \alpha<1/3$, $\beta=0$ and 
\[
f(x, y, z)=(1-3\alpha)(x^2+y^2)+x^2y^2+2z^2-4\alpha
\]
for $(x, y, z)\in \H$. It is clear that $f$ is coercive and symmetric with respect to $z$-axis. Moreover, $f$ is h-convex in $\H$, since 
\begin{equation}\label{nontrivial1}
(\nabla_H^2 f)^\star(x, y, z)=\begin{pmatrix}
2(1-3\alpha)+3y^2 & 3xy \\ 3xy & 2(1-3\alpha)+3x^2
\end{pmatrix}
\end{equation}
is nonnegative for all $(x, y, z)\in \H$. We remark that $f$ is not convex in the Euclidean sense. 

By direct calculations, one can verify that the unique solution in this case is 
\[
u(x, y, z)=x^2+y^2+x^2y^2+2z^2. 
\]
for  $(x, y, z)\in \H$. It is also easily seen that $u$ is h-convex in $\H$ (but not convex in $\R^3$), since it happens to coincide with $f$ with $\alpha=0$. 
\end{example}

Although in Theorem \ref{cor h-convex} $u$ is assumed to be coercive in the sense of \eqref{coercive}, it is worth stressing that for \eqref{special eqn} (with $\alpha\geq 0, \beta\geq 0$), no coercivity conditions on $u$ or on $f$ are essentially needed. The coercivity of $f$, together with its h-convexity, does assist one to show the coercivity of the solution $u$ by the comparison theorem; in fact, $u=f$ is clearly a subsolution of \eqref{special eqn}. 

On the other hand,  if $u$ is not coercive, one can replace $f$ by, for example,  
\[
f_\vep(x, y, z)=f(x, y, z)+\vep(x^2+y^2+z^2)
\]
with $\vep>0$ small so that the corresponding solution $u^\vep$ is also coercive. Since $f$ is still h-convex and symmetric, we can then apply Theorem \ref{cor h-convex} to deduce the h-convexity of $u^\vep$. Letting $\vep\to 0$, we have $u^\vep\to u$ locally uniformly by adopting the standard stability theory of viscosity solutions. The h-convexity of $u$ follows immediately. 

As Theorem \ref{cor h-convex} holds  for \eqref{special eqn} without coercivity assumptions on $u$, the following trivial example is thus covered. 

\begin{example}\label{example h-convex sol2}
Pick $\alpha, \beta\geq 0$ arbitrarily. Let $f\equiv C$ in $\H$ for any $C\in \R$. It is then clear that $u=C$ is the unique solution of \eqref{special eqn}, which is obviously h-convex.  
\end{example}

For the equation \eqref{special eqn}, (A4) is satisfied if $f$ is symmetric with respect to the $z$-axis. On the other hand, if the symmetry condition on $f$  is dropped, then in general the unique solution $u$ of \eqref{special eqn} will not be symmetric and the h-convexity of $u$ may fail to hold, as shown in Example \ref{example no symmetry}. We give another example below based on slight modification of Example \ref{example no symmetry}. 

\begin{example}\label{example no symmetry2}
Let us consider \eqref{special eqn} with $\alpha=\beta=1$ $\mathcal{A}=\{\pm(0, 2)\}$; in other words, we study the following equation: 
\begin{equation}\label{example no symmetry eq2}
u=\Delta_H u+\left|\la (0, 2), \nabla_H u\ra\right|+f(p)\quad \text{in $\H$.} 
\end{equation}
Here we take 
\[
f(x, y, z)=2xz+x^2y+{1\over 4}x^4+{3\over 2}y^2+6|y|-1.
\]
It is clear that $f$ is not symmetric with respect to $z$-axis.  Based on similar calculations for Example \ref{example no symmetry}, we can verify that $u$ given in \eqref{example no symmetry sol} is the unique solution of \eqref{example no symmetry eq2}, which is not h-convex around the origin. However, the function $f$ is still h-convex in $\H$. 
\end{example}

Besides the symmetry condition, we also have the following example suggesting that the concavity $\xi\mapsto F(p, r, \xi, A)$ seems to be necessary. 
\begin{example}\label{ex strong concavity}
Consider the equation
\[
u+|\nabla_H u|^2=f(p)
\]
with $f(x,y,z)=(4\vep^2+1)(x^2+y^2)+2z$ being h-convex and right invariant h-convex. However, it is easily seen that 
\[
u(x, y, z)=-\vep x^2-\vep y^2+2z
\]
is a solution but it is neither h-convex nor right invariant h-convex. The drawback of this example is that $u$ is not coercive. It would be interesting to construct better examples satisfying coercivity. 
\end{example}

\subsection{The Euclidean envelope}\label{sec:euclidean}

We conclude this part by providing a result on the Euclidean convexity of solutions to \eqref{elliptic eqn}, although it is not our main concern in this paper. 
In this case, we need to  strengthen the assumption (A3):
\begin{enumerate}
\item[(A3)'] $(p, r, \xi, A)\mapsto F(p, r, \xi, A)$ is concave in the Euclidean sense, namely, for any $k\in \N$,
\[
\sum_{i=1}^k c_i F(p_i, r_i, \xi_i, A_i)\leq F\left(\sum_{i=1}^k c_ip_i,\ \sum_{i=1}^k c_i r_i,\ \sum_{i=1}^kc_i \xi_i,  \ \sum_{i=1}^k c_i A_i\right)
\]
holds for any $c_i\in [0, 1]$, $p_i\in \H$, $r_i\in \R$, $\xi_i\in \R^2$ and $A_i\in \S^2$ ($i=1, 2, \ldots, k$).
\end{enumerate}

\begin{thm}[Supersolution preserving of the Euclidean envelope]\label{thm e-convex}
Assume that (A1), (A2) and (A3)' hold. Let $u$ be a lower semicontinuous supersolution of \eqref{elliptic eqn}. Suppose that $u$ satisfies the coercivity condition \eqref{coercive}. Let $\Gamma_E u$ be the Euclidean convex envelope of $u$ given by \eqref{euclidean envelope2} with $n=3$. Then $\Gamma_E u$ is also a supersolution of \eqref{elliptic eqn}. Moreover, if the comparison principle for \eqref{elliptic eqn} holds, then the unique solution of \eqref{elliptic eqn}  is convex in $\R^3$ (in the Euclidean sense). 
\end{thm}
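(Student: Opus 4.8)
The plan is to specialize the argument of Theorem \ref{thm r-preserve}, which simplifies considerably here: since the competitor points $p_i$ in \eqref{euclidean envelope2} range over all of $\R^3$, no constraint of the type \eqref{F concavity} is present, hence no penalty term $\tilde W$ is needed, and the unconstrained concavity (A3)' is exactly what the argument will consume. First I would record the elementary facts that, under \eqref{coercive}, $u$ is bounded below, $\Gamma_E u$ is finite, continuous and convex in $\R^3$ with $\inf_{\H} u\le \Gamma_E u\le u$, and that for every $p\in\H$ the infimum in \eqref{euclidean envelope2} with $N=3$ is attained; the latter is a routine consequence of \eqref{coercive} together with the lower semicontinuity of $u$, since coercivity confines the relevant minimizing points to a bounded set. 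Granting the supersolution property of $\Gamma_E u$, the endgame is immediate: the comparison principle applied to the subsolution $u$ (which is a solution) and the supersolution $\Gamma_E u$ gives $u\le\Gamma_E u$, while $\Gamma_E u\le u$ by construction, so $u=\Gamma_E u$ is convex. Thus the whole content is the supersolution property.

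To prove it, fix $\varphi\in C^2(\H)$ and $p_0=(x_0,y_0,z_0)\in\H$ such that $\Gamma_E u-\varphi$ attains a minimum at $p_0$; by the usual modification of $\varphi$ away from $p_0$ we may take $\varphi$ bounded. Choose minimizers $c_i\in[0,1]$ and $p_i^0=(x_i,y_i,z_i)\in\R^3$, $i=1,\dots,4$, with $\sum_i c_i=1$, $\sum_i c_ip_i^0=p_0$ and $\Gamma_E u(p_0)=\sum_i c_iu(p_i^0)$; discarding indices with $c_i=0$ we may assume $c_i>0$. Exactly as in Theorem \ref{thm r-preserve}, the defining property \eqref{euclidean envelope2} of $\Gamma_E u$ together with the minimality of $\Gamma_E u-\varphi$ at $p_0$ shows that
\[
(p_1,\dots,p_4)\longmapsto \sum_i c_iu(p_i)-\varphi\Big(\sum_i c_ip_i\Big)
\]
attains a global minimum at $(p_1^0,\dots,p_4^0)$. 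The Crandall--Ishii lemma then supplies, for every $\sigma>0$, subjets $(\eta_i,Q_i)\in\overline{J}_E^{2,-}u(p_i^0)$ with $c_i\eta_i=c_i\nabla\varphi(p_0)$, hence $\eta_i=\nabla\varphi(p_0)$ for all $i$, together with the block-matrix inequality whose diagonal blocks are $c_iQ_i$ and whose right-hand side is $\mathbf A-\sigma\mathbf A^2$ with $\mathbf A=\big(c_ic_j\nabla^2\varphi(p_0)\big)_{i,j}$.

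Now I convert to horizontal subjets as in Theorem \ref{thm r-preserve}: with the $2\times3$ matrices $M_p$ defined there, $(\xi_i,P_i):=\big(M_{p_i^0}\eta_i,\ M_{p_i^0}Q_iM_{p_i^0}^T\big)\in\overline{J}_H^{2,-}u(p_i^0)$. The key identity is $\sum_i c_iM_{p_i^0}=M_{p_0}$, which holds because $\sum_i c_ix_i=x_0$ and $\sum_i c_iy_i=y_0$; hence $\sum_i c_i\xi_i=M_{p_0}\nabla\varphi(p_0)=\nabla_H\varphi(p_0)$. Pairing the block inequality with the vectors $\ell[a,b]=\big(M_{p_1^0}^T(a,b)^T;\dots;M_{p_4^0}^T(a,b)^T\big)$ and repeating the computation of \eqref{jet second2}--\eqref{jet second3}, which rests on $(\nabla_H^2\varphi)^\star(p)=M_p\nabla^2\varphi(p)M_p^T$, gives $\sum_i c_iP_i\ge(\nabla_H^2\varphi)^\star(p_0)-O(\sigma)$. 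Since $u$ is a supersolution, $F(p_i^0,u(p_i^0),\xi_i,P_i)\ge0$ for each $i$; multiplying by $c_i$, summing, and invoking the unconstrained concavity (A3)' together with $\sum_i c_ip_i^0=p_0$, $\sum_i c_iu(p_i^0)=\Gamma_E u(p_0)$ and $\sum_i c_i\xi_i=\nabla_H\varphi(p_0)$ yields
\[
F\Big(p_0,\ \Gamma_E u(p_0),\ \nabla_H\varphi(p_0),\ \sum_i c_iP_i\Big)\ge0;
\]
then (A1)--(A2) give $F\big(p_0,\Gamma_E u(p_0),\nabla_H\varphi(p_0),(\nabla_H^2\varphi)^\star(p_0)-O(\sigma)\big)\ge0$, and letting $\sigma\to0$ with the continuity of $F$ shows $\Gamma_E u$ is a supersolution, which by the first paragraph finishes the proof.

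I expect the only delicate point to be, as usual, the bookkeeping of the semijet-sum step and the algebra of passing from Euclidean to horizontal jets; but both are already carried out — in the harder form involving $\tilde W$ — in the proof of Theorem \ref{thm r-preserve}, so here they only need to be specialized. The conceptual gain, which I would emphasize, is precisely that with the competitors $p_i$ free the clean relations $\sum_i c_i\xi_i=\nabla_H\varphi(p_0)$ and $\sum_i c_iP_i\ge(\nabla_H^2\varphi)^\star(p_0)-O(\sigma)$ fall out of the single constraint $\sum_i c_ip_i^0=p_0$ alone, so neither the symmetry hypothesis (A4) nor the right-invariant restriction built into (A3) is needed.
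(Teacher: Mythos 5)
Your proposal is correct and follows essentially the same route as the paper: drop the penalty term $\tilde W$ from the proof of Theorem \ref{thm r-preserve}, run the Crandall--Ishii argument on the unconstrained minimization, pass to horizontal jets via the matrices $M_p$ using $\sum_i c_iM_{p_i}=M_{p_0}$, and invoke (A3)' before closing with the comparison principle. The only cosmetic difference is that you carry four points $p_i$ (as Carath\'eodory in $\R^3$ suggests) where the paper writes three, which does not affect the argument since (A3)' holds for arbitrary $k$.
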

\begin{proof}
The proof is based on slight modification of that of Theorem \ref{thm r-preserve}. Since we consider the Euclidean convex combination, we can simply set the penalty term ${\tilde{W}}\equiv 0$ in the proof of Theorem \ref{thm r-preserve} and the whole argument still works. Indeed, if we let $\varphi\in C^2(\H)$ denote the test function of $\Gamma_E u$ at a point $p_0\in \H$ and let $(\hat{p}_1, \hat{p}_2, \hat{p}_3)$ denote the minimizer of 
\[
(p_1, p_2, p_3)\mapsto \sum_{i=1, 2, 3} c_iu(p_i)-\varphi\left(\sum_{i}c_i p_i\right),
\]
then for any $\sigma>0$ we have $(\eta_i, Q_i)\in \overline{J}^{2, -} u(p_i)$ such that 
\[
\eta_i=\nabla \varphi(p_0)
\]
and 
\[
\begin{pmatrix}
c_1 Q_1 &  0 & 0\\
0 & c_2 Q_2 & 0\\
0 & 0 & c_3 Q_3
\end{pmatrix}
\geq 
\begin{pmatrix}
c_1^2 Q_0 & c_1c_2Q_0 & c_1c_3 Q_0\\
c_1 c_2 Q_0 & c_2^2 Q_0 & c_2 c_3 Q_0\\
c_1 c_3 Q_0 & c_2 c_3 Q_0 & c_3^2 Q_0
\end{pmatrix}
-\sigma 
\begin{pmatrix}
c_1^2 Q_0 & c_1c_2Q_0 & c_1c_3 Q_0\\
c_1 c_2 Q_0 & c_2^2 Q_0 & c_2 c_3 Q_0\\
c_1 c_3 Q_0 & c_2 c_3 Q_0 & c_3^2 Q_0
\end{pmatrix}^2,
\]
where $Q_0=\nabla^2 \varphi(p_0)$. We next turn to the horizontal jets $(\xi_i, P_i)\in \overline{J}_H^{2, -}u(p_i)$. By the calculations similar to the proof of Theorem \ref{thm r-preserve}, we get the following variants of \eqref{jet first} and \eqref{jet second}:
\[
\sum_i c_i \xi_i=\nabla_H \varphi(p_0);
\]
\[
\sum_i c_i P_i\geq (\nabla_H^2\varphi)^\star(p_0)-O(\sigma). 
\]
Applying the supersolution property of $u$ at $\hat{p}_i$ and taking the weighted average, we obtain 
\[
\sum_i c_i F\left(\hat{p}_i, u(\hat{p}_i), \xi_i, P_i\right)\geq 0,
\]
which, by (A3)' and the ellipticity of $F$,  yields 
\[
F\left(p_0, \Gamma_E u(p_0), \nabla_H\varphi(p_0), (\nabla_H^2 \varphi)^\star(p_0)-O(\sigma)\right)\geq 0.
\]
We complete the proof by letting $\sigma\to 0$. 

If the comparison principle for \eqref{elliptic eqn} holds, then the unique solution $u$ satisfies $u\leq \Gamma_E u$. Since the reverse inequality clearly holds, the convexity of $u$ follows immediately. 
\end{proof}

This result is closely related to the vast existing literature on the convexity of solutions to  a large variety of elliptic and parabolic equations in the Euclidean space; see \cite{Kor, K4, Ke, ES1, GGIS, ALL, Ju, LSZ} etc. 
In fact, Theorem \ref{thm e-convex} gives a clearer and more direct answer for equations like \eqref{elliptic eqn}.


As the Euclidean convexity implies h-convexity, Theorem \ref{thm e-convex} helps us understand h-convexity of solutions to a large class of nonlinear elliptic equations including \eqref{special eqn}. A trivial example for Theorem \ref{thm e-convex} is in Example \ref{example h-convex sol2}. We give a less trivial one below. 

\begin{example}
Let us revisit \eqref{special eqn} with $\alpha=1$, $\beta=0$ and 
\[
f(x, y, z)=x+2z^2+\vep(x^2+y^2)
\]
for $\vep>0$ and $(x, y, z)\in \H$. We are unable to apply Theorem \ref{cor h-convex} to obtain $h$-convexity of the unique solution $u$, since $f$ is not symmetric with respect to $z$-axis and therefore (A4) fails to hold. However, we can use Theorem \ref{thm e-convex} to conclude that $u$ is convex in the Euclidean sense. In fact, the unique solution in this case can be explicitly written as 
\[
u(x, y, z)=(1+\vep)(x^2+y^2+4)+x+2z^2
\] 
for $(x, y, z)\in \H$. 

On the other hand, when $f$ is not convex but only h-convex like Example \ref{example h-convex sol1}, we can only show the h-convexity of $u$ under the symmetry condition of $f$. 

It is worth stressing that even if we want prove the Euclidean convexity of solutions to sub-elliptic PDEs, we probably still need the strong concavity of $F$ in all arguments, as shown in Example \ref{ex strong concavity}. 
\end{example}

\end{document}